\newcommand{\bu}[1]{{\mathfrak{b}_#1}}
\newcommand{\R} {{\mathbb R}}                              
\newcommand{\K} {\mathcal{K}}                           
\newcommand{\KG} {{\mathcal{K}} G}
\newcommand{\hide}[1]{}
\newcommand{\per}{\mathcal{P}}
\newcommand{\suchthat} {\:\: | \:\:}
\newcommand{\M} [1] {\ensuremath{{\overline{\mathcal M}}{_{0, #1}(\R)}}}    
\newcommand{\map}{\varphi}
\newcommand{\fil}{\textup{fill}}
\def\KP{{\mathcal K}P}
\def\K{\mathcal K}
\definecolor{mains}{cmyk}{0, .80, .80, 0}  
\theoremstyle{plain}
\newtheorem{thm}{Theorem}
\newtheorem{prop}[thm]{Proposition}
\newtheorem{cor}[thm]{Corollary}
\newtheorem{lem}[thm]{Lemma}
\theoremstyle{definition}
\newtheorem*{defn}{Definition}
\newtheorem*{exmp}{Example}
\theoremstyle{remark}
\newtheorem*{rem}{Remark}
\numberwithin{equation}{section}
\begin{document}


\title{Convex Polytopes from Nested Posets}

\subjclass[2000]{Primary 52B11, Secondary 55P48, 18D50}

\author{Satyan L.\ Devadoss}
\address{S.\ Devadoss: Williams College, Williamstown, MA 01267}
\email{satyan.devadoss@williams.edu}

\author{Stefan Forcey}
\address{S.\ Forcey: University of Akron, OH 44325}
\email{sf34@uakron.edu}

\author{Stephen Reisdorf}
\address{S.\ Reisdorf: University of Akron, OH 44325}
\email{stephenreisdorf@gmail.com}

\author{Patrick Showers}
\address{P.\ Showers: University of Akron, OH 44325}
\email{pjs36@zips.uakron.edu}     
     
\begin{abstract}
Motivated by the graph associahedron $\KG$, a polytope whose face poset is based on connected subgraphs of $G$, we consider the notion of associativity and tubes on posets.  This leads to a new family of simple convex polytopes obtained by iterated truncations.  These generalize graph associahedra and nestohedra, even encompassing notions of nestings on CW-complexes.  However, these \emph{poset associahedra} fall in a different category altogether than generalized permutohedra.
\end{abstract}

\keywords{poset, graph associahedron, nesting, polytope}

\maketitle

\baselineskip=17pt

%
%
\section{Background}
\subsection{}

Given a finite graph $G$, the graph associahedron $\KG$ is a polytope whose face poset is based on the connected subgraphs of $G$ \cite{cdf}.  For special examples of graphs, $\KG$ becomes well-known, sometimes classical:  when $G$ is a path, a cycle, or a complete graph, $\KG$ results in the associahedron, cyclohedron, and permutohedron, respectively.  Figure~\ref{f:2d-tubes} shows some examples, for a graph and a pseudograph with multiple edges.
\begin{figure}[h]
\includegraphics{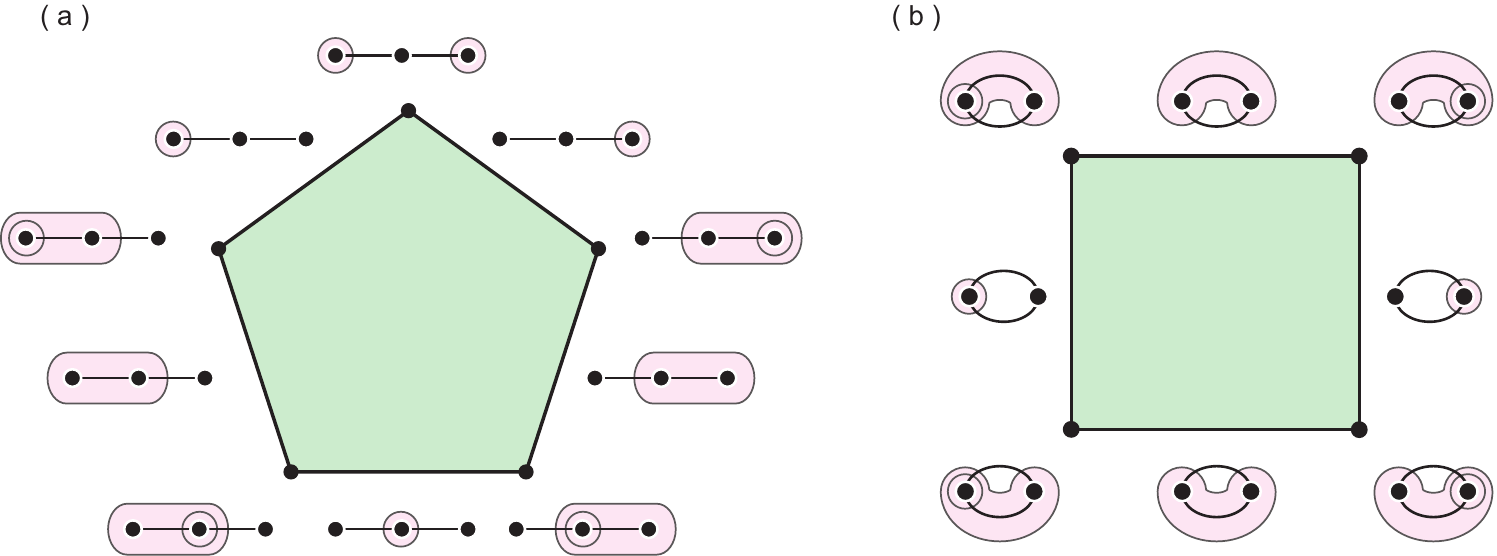}
\caption{Graph associahedra of a path and a multi-edge.}
\label{f:2d-tubes}
\end{figure}
These polytopes were first motivated by De Concini and Procesi in their work on ``wonderful'' compactifications of hyperplane arrangements \cite{dp}.  In particular, if the hyperplane arrangement is associated to a Coxeter system, the graph associahedron $\KG$ appear as tilings of these spaces, where its underlying graph $G$ is the Coxeter graph of the system  \cite{djs}.  These compactified arrangements are themselves natural generalizations of the Deligne-Knudsen-Mumford compactification \M{n} of the real moduli space of curves \cite{dev1}. From a combinatorics viewpoint, graph associahedra arise in numerous areas, ranging from Bergman complexes of oriented matroids to Heegaard Floer homology \cite{blo}.  Most notably, these polytopes have emerged as graphical tests on ordinal data in biological statistics \cite{mps}.

\subsection{}

The combinatorial and geometric structures of these polytopes capture and expose the fundamental concepts of connectivity and nestings, and it is not surprising that there have been several similar notions, such as nested sets \cite{fei}, nested complexes \cite{zel} and the larger class of generalized permutohedra of Postnikov \cite{pos}. However, none of these constructions capture the notion of nested sets of posets, as we do below.  Indeed, our notion of the set of poset tubes is not a classical {building set}, but falls in a  different category altogether.  

In this paper, we construct a new family of convex polytopes which are extensions of nestohedra and graph associahedra via a generalization of building sets. But rather than starting with a set, we begin with a poset $P$.  The resulting \emph{poset associahedron} $\KP$, based on connected lower sets of $P$, cover a wide swath of existing examples from geometric combinatorics, including the permutahedra, associahedra, multiplihedra, graph associahedra, nestohedra, pseudograph associahedra, and their liftings; in fact, all these types are just from two rank posets.  Newly discovered are polytopes capturing associativity information of CW-complexs.

An overview of the paper is as follows:  Section~\ref{s:posets} supplies the definitions of poset associahedra along with several examples, while Section~\ref{s:construct} provides methods of constructing them via induction.  Specialization to nestohedra and permutohedra is given in Section~\ref{s:relation}, and we finish with proofs of the main theorems in Section~\ref{s:proof}.

%
%
\section{Posets} \label{s:posets}
\subsection{}

We begin with some foundational definitions about posets.  The reader is forewarned that definitions here might not exactly match those from earlier works.   A \emph{lower set}  $L$ is a subset of a poset $P$ such that if $y \preceq x \in L$, then $y \in L$.   The \emph{boundary} of an element $x$ is $\partial x :=\{y \in P \suchthat y \prec x\}$.

\begin{defn} 
Let $\bu x := \{y \in P \suchthat \partial y =  \partial x\}$ be the \emph{bundle} of the element $x$. A bundle is \emph{trivial} if $\bu x = \{x\}$.
\end{defn}

Throughout this paper, a poset will be visually represented by its Hasse diagram.  Consider the example of a poset $P$ given on the left side of Figure~\ref{f:bundle}.  The subset $\{1,2,4,5\}$ in part (a), depicted by the highlighted region, is not a lower set since it does not include element $3$.  This poset is partitioned into four bundles,  $\{1,2,3\}$,\, $\{4\}$,\, $\{5\}$,  and\, $\{6, 7, 8\}$, with elements in a bundle having identical boundary.  In particular, notice that all minimal elements of the poset are in one bundle since they share the empty set as boundary.  The following is immediate:

\begin{figure}[h]
\includegraphics{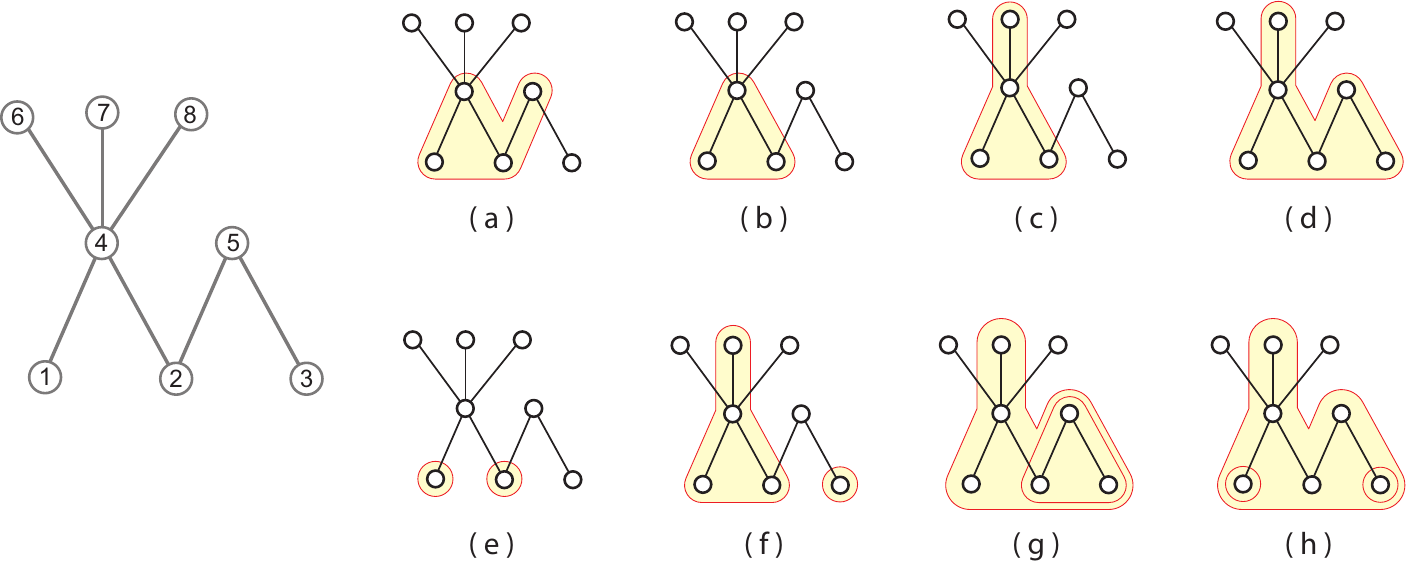}
\caption{Some examples of valid and invalid tubes and tubings.}
\label{f:bundle}
\end{figure}

\begin{lem}
The elements of poset $P$ are partitioned into equivalence classes of bundles.
\end{lem}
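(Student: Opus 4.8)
The plan is to exhibit the bundles as the fibers of a single well-defined map, from which the partition property is immediate. First I would define a relation on $P$ by declaring $x \sim y$ precisely when $\partial x = \partial y$. This is nothing but the pullback, under the boundary map $x \mapsto \partial x$, of the equality relation on subsets of $P$. Since equality of sets is itself an equivalence relation, $\sim$ inherits reflexivity, symmetry, and transitivity with no further work: $\partial x = \partial x$ gives reflexivity, $\partial x = \partial y \Rightarrow \partial y = \partial x$ gives symmetry, and $\partial x = \partial y$ together with $\partial y = \partial z$ forces $\partial x = \partial z$ for transitivity.

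Next I would match the bundles to the equivalence classes. By definition $\bu x = \{y \in P \suchthat \partial y = \partial x\}$ is exactly the class $[x]$ of $x$ under $\sim$. In particular $x \in \bu x$, so the bundles cover $P$, and the standard fact that the classes of an equivalence relation are pairwise disjoint or identical then shows that $\{\bu x\}_{x \in P}$ is a partition of $P$.

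I do not anticipate a genuine obstacle here; the statement is flagged as \emph{immediate} in the text, and the content is entirely formal. The only point meriting care is confirming that the boundary map is genuinely defined on all of $P$ (every element has a boundary, possibly the empty set), so that each $x$ lands in some class and the classes cover $P$ rather than merely being pairwise disjoint. The example following the definition, in which all minimal elements share the empty boundary and hence form a single bundle, serves as a good sanity check that the empty-boundary case is handled correctly.
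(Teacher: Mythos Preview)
Your proposal is correct and matches the paper's treatment: the paper simply declares the lemma immediate and gives no further argument, and your observation that $\bu x$ is by definition the equivalence class of $x$ under the relation $\partial x = \partial y$ is exactly the one-line justification intended.
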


\begin{defn}
A lower set is \emph{filled} if, whenever it contains the boundary $\partial x$ of an element $x$, it also intersects the bundle $\bu x$ of that element.
A \emph{tube} is a filled, connected lower set. A \emph{tubing} $T$ is a collection of tubes (not containing all of $P$) which are pairwise disjoint or pairwise nested, and for which the union of every subset of $T$ is filled.
\end{defn}

Figure~\ref{f:bundle}(b) shows the boundary of $\{6, 7, 8\}$, which is an unfilled lower set, whereas (c) is a filled one. Note that parts (c, d) display examples of one tube.  Parts (e, f) display two disjoint tubes which are not  tubings, since the union of the tubes would create an unfilled lower set.  Examples of tubings with two and three components are given by (g, h) respectively.  

\subsection{}

We now present our main result.

\begin{thm} \label{t:combin}
Let $P$ be a poset with $n$ elements partitioned into $b$ bundles.  If $\pi(P)$ is the set of tubings of $P$ ordered by reverse containment, the \emph{poset associahedron} $\KP$ is a convex polytope of dimension $n-b$ whose face poset is isomorphic to $\pi(P)$.
\end{thm}

\noindent This theorem follows from the construction of $\KP$ from truncations, described in Theorem~\ref{t:const} below.  We now pause to illustrate several examples.

\begin{exmp}
Figure~\ref{f:2d-poset} shows the two polytopes of Figure~\ref{f:2d-tubes}, reinterpreted as tubings on posets of their underlying graphs.  Both posets are of \emph{two rank}, the maximum length of any chain of the poset.  Part (a) has 5 elements and 3 bundles, whereas (b) has 4 elements and 2 bundle, both resulting in polygons, as given in Theorem~\ref{t:combin}.
\end{exmp}

\begin{figure}[h]
\includegraphics{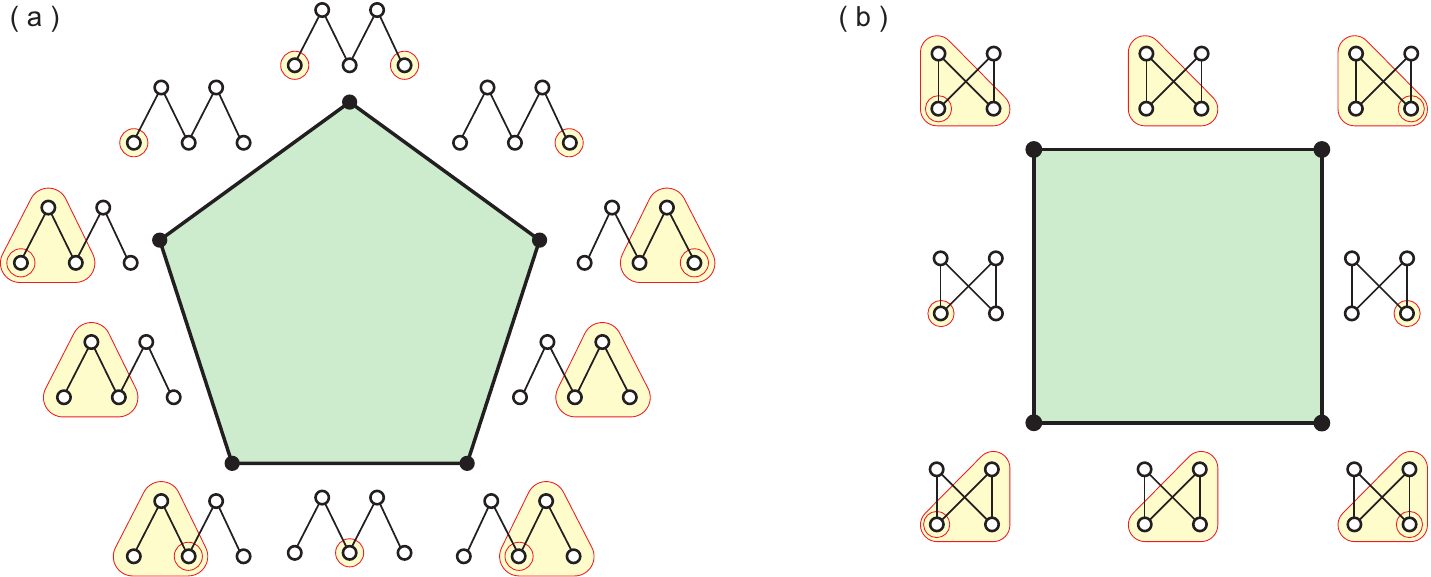} 
\caption{Poset versions of Figure~\ref{f:2d-tubes}.}
\label{f:2d-poset}
\end{figure}

\begin{exmp}
Figure~\ref{f:2d-poset-2} shows two different posets, resulting in identical poset associahedra as Figure~\ref{f:2d-poset}. Part (a) shows a poset structure which does not even come from a CW-complex.  Notice here that the left element of height zero cannot be a tube since it needs to be filled.  Part (b) has a near identical structure to Figure~\ref{f:2d-poset}(b).  Here, the bottom-right element cannot be a tube in itself since it is unfilled.  Because both posets have 5 elements and 3 bundles, the dimension of the polytopes is two.
\end{exmp}

\begin{figure}[h]
\includegraphics{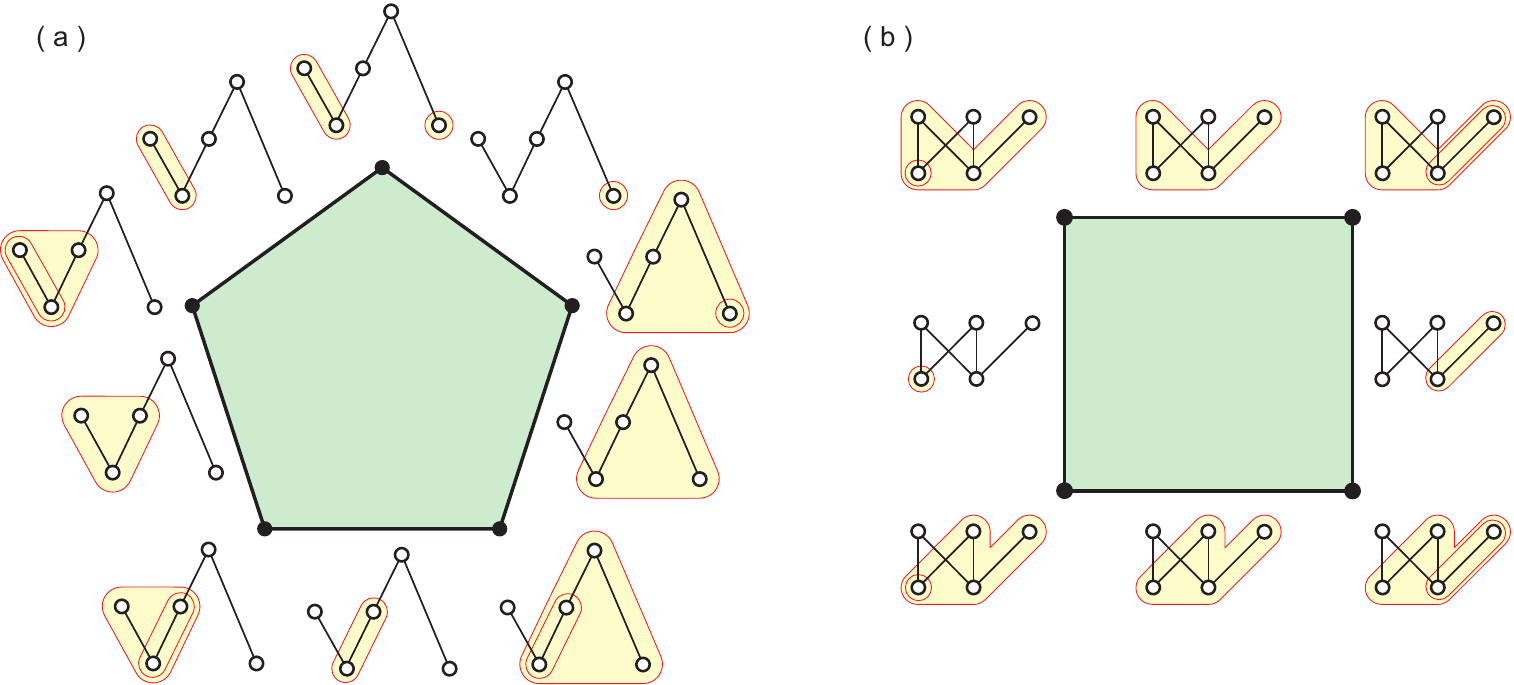} 
\caption{Alternate poset associahedra.}
\label{f:2d-poset-2}
\end{figure}

\begin{exmp}
Three examples of 3D poset associahedra are given in Figure~\ref{f:3d-exmps}.  The cube in (a) can be viewed as an extension of the square in Figure~\ref{f:2d-poset}(b), and  Proposition~\ref{p:cube} generalizes this pattern to the $n$-cube.  A truncation of this cube results in (b), and both posets having 6 elements partitioned into 3 bundles.  Part (c) shows a novel construction of the 3D associahedron $K_5$, with 7 elements partitioned into 4 bundles.
\end{exmp}

\begin{figure}[h]
\includegraphics{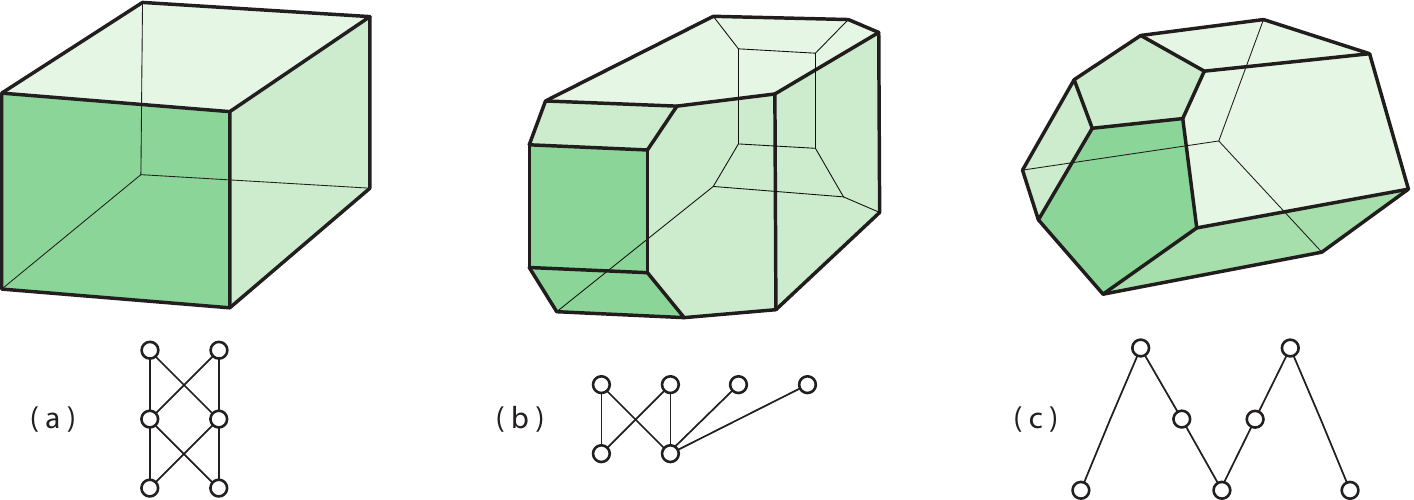}
\caption{Examples of 3D poset associahedra.}
\label{f:3d-exmps}
\end{figure}

\subsection{}

The notion of being ``filled'' appears in different guises:  For graph associahedra, a \emph{tube} of a graph is filled because it is an induced subgraph, catalogued just by listing its set of vertices \cite[Section 2]{cd}.  A \emph{tubing} is filled  because its tubes are ``far apart,''  which is equivalently described by saying that two distinct tubes in a tubing cannot have a single edge connecting them.  Since a simple graph has no bundles, the following is immediate:

\begin{prop}
The graph associahedron $\KG$ can be obtained as a poset associahedron $\KP$, where $P$ is the face poset of graph $G$.
\end{prop}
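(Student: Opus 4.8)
The plan is to build a bijection between tubes of the face poset $P$ and tubes of the graph $G$, show it carries tubings to tubings while preserving containment, and then invoke Theorem~\ref{t:combin} to identify the face posets. First I would pin down the tubes. The face poset $P$ has the vertices of $G$ as its minimal elements and the edges one rank above, with $\partial e=\{u,w\}$ for an edge $e=\{u,w\}$ and $\partial v=\emptyset$ for a vertex $v$; hence a lower set $L$ is a vertex set together with some edges all of whose endpoints lie in it, i.e.\ a subgraph. Because $G$ is simple, every edge-bundle is trivial, $\bu{e}=\{e\}$, so the filled condition applied to $e$ reads exactly: \emph{if both endpoints of $e$ lie in $L$ then $e\in L$}. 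Thus a filled lower set is precisely an induced subgraph, recorded by its vertex set; the single vertex-bundle (all minimal elements) only forces $L$ to be nonempty, which is automatic. Connectivity in the Hasse diagram of $L$ coincides with graph-connectivity of the induced subgraph, since a Hasse path alternates between vertices and incident edges. So a tube of $P$ is exactly a connected induced subgraph of $G$, i.e.\ a tube of $G$, and I set $\Phi(T)=V_T$, a bijection whose inverse sends a graph tube to the induced lower set on its vertices.

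Next I would check $\Phi$ respects compatibility. Two poset tubes are nested iff their vertex sets are, and disjoint as lower sets iff their vertex sets are disjoint (disjoint vertex sets cannot share an edge), so the ``pairwise disjoint or nested'' condition transfers verbatim, the mixed case being incompatible on both sides. The main obstacle, and really the only content, is matching the poset requirement that \emph{the union of every subfamily be filled} with the graph requirement that the tubes be ``far apart.'' Everything here turns on one observation: an edge $e=\{u,w\}$ of $G$ joining two disjoint tubes places $u$ and $w$ into their union but leaves $e$ out of it, since $e$ lies in neither tube, so the bundle $\bu{e}$ is missed and the union is unfilled. Running this both ways: if every subunion is filled then no edge joins two disjoint tubes, which is the far-apart condition; conversely, if the tubes are far apart then, for any subfamily and any edge with both endpoints in its union, those endpoints must lie in a common tube (the disjoint case being excluded), so the edge lies in that tube and the union is filled.

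Finally, the two excluded cases also align: the tube equal to all of $P$ corresponds under $\Phi$ to the universal tube $G$ (when $G$ is connected), which is likewise barred from a graph tubing. Since $\Phi$ is a bijection on tubes that carries tubings to tubings and plainly preserves inclusions of individual tubes, it induces an isomorphism between the tubings of $P$ and the tubings of $G$ ordered by reverse containment. By Theorem~\ref{t:combin} the former is the face poset of $\KP$, while the latter is by definition the face poset of $\KG$, so the two polytopes are combinatorially identical, as claimed.
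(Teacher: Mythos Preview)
Your proof is correct and is essentially a careful unpacking of the paper's own argument, which treats the proposition as immediate from the preceding paragraph: filled lower sets are induced subgraphs, connectivity in the Hasse diagram matches graph connectivity, and the ``union filled'' condition becomes the ``far apart'' condition because an edge bridging two disjoint tubes witnesses unfilledness. You have spelled out exactly these identifications, including the automatic satisfaction of the vertex-bundle condition and the matching exclusion of the full set, so there is no substantive difference in approach.
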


For pseudographs (having loops and multiple edges), a filled tube is a connected subgraph $t$ where at least one edge between every pair of nodes of $t$ is included if such edges exist \cite[Section 2]{cdf}.  For multiple loops and edges of $G$, the notion of tubes on posets match perfectly with tubes on $G$, and the proposition above extends to the pseudograph associahedron.  The first three examples of Figure~\ref{f:pseudo} displays invalid tubings (all due to not being filled) and the last a valid one; the top row shows tubes on graphs whereas the bottom recasts them on posets.

\begin{figure}[h]
\includegraphics{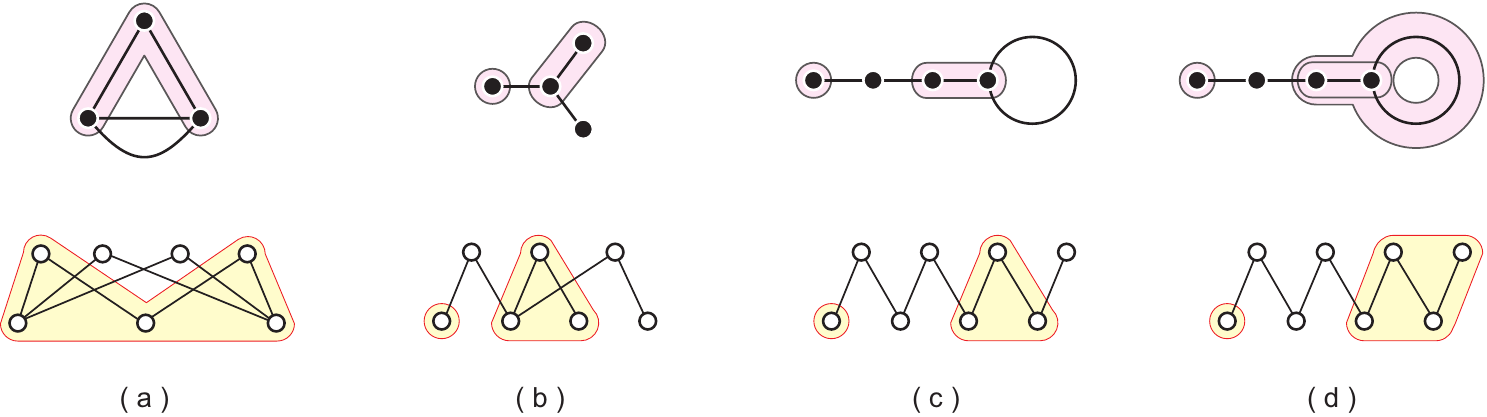}
\caption{Valid and invalid tubings on graphs and posets.}
\label{f:pseudo}
\end{figure}

\begin{rem}
For a single loop attached to a node $v$ of $G$, the pseudograph associahedron defined in \cite{cdf} gave a choice of choosing or ignoring the loop when $v$ is chosen in a tube; see Figure~\ref{f:pseudo}(c) and (d).  In this paper, however, for the sake of consistency in the notion of ``filled'', we always include the loop for poset associahedron.  This allows us to always obtain convex polytopes, rather than unbounded polyhedral chambers of \cite{cdf}.
\end{rem}

The notion of \emph{associativity}, encapsulated by drawing tubes on graphs, has a natural generalizations to higher-dimensional complexes.  In particular, for any CW-complex structure $X$, consider its face poset $P_X$.  The poset associahedron $\KP_X$ captures the analogous information of $X$ that the graph associahedron captures for a graph.

\begin{exmp}
Figure~\ref{f:cello}(a) shows a CW-complex, with three 2-cells, 1-cells, and 0-cells.  Part (b) shows the poset structure of this complex, and (c) its poset associahedron.   
\end{exmp}

\begin{figure}[h]
\includegraphics{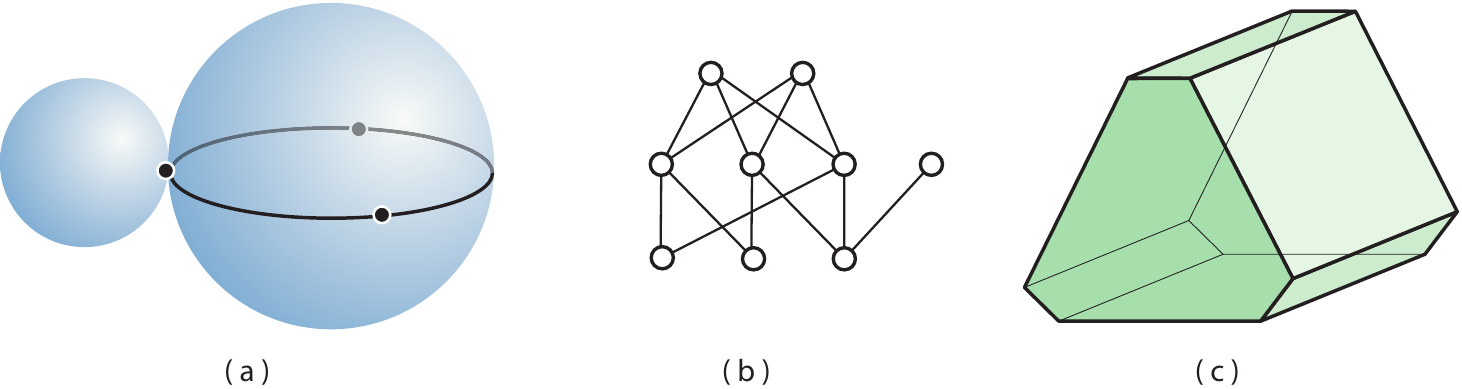}
\caption{The poset associahedron of a CW-complex.}
\label{f:cello}
\end{figure}

%
%
\section{Constructions} \label{s:construct}
\subsection{}

The poset associahedron $\KP$ is recursively built by a series of truncations.  Because the truncation procedure is a delicate one, we present an overview here and save the details for the proof in Section~\ref{s:proof}.  First, the following result allows us to consider only connected posets:

\begin{prop} \label{p:discon}
Let $P$ be a poset with connected Hasse components $P_1$, \ldots, $P_m$.    Then $\KP$ is isomorphic to $\KP_1 \times \cdots \times \KP_m \times \Delta_{m-1}.$
\end{prop}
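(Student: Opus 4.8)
The plan is to reduce the statement to a combinatorial isomorphism of face posets and then exhibit an explicit decomposition of tubings. By Theorem~\ref{t:combin} the face poset of $\KP$ is $\pi(P)$, and likewise for each $\KP_i$; moreover the nonempty faces of a product of polytopes are exactly the products of nonempty faces of the factors, so the face poset of $\KP_1 \times \cdots \times \KP_m \times \Delta_{m-1}$ is the product poset $\pi(P_1) \times \cdots \times \pi(P_m) \times L(\Delta_{m-1})$, where $L(\Delta_{m-1})$ is the poset of nonempty faces of the simplex, i.e.\ the nonempty subsets of $[m] = \{1, \dots, m\}$ ordered by inclusion. Thus it suffices to produce a poset isomorphism between $\pi(P)$ and this product. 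As a sanity check, since all minimal elements of $P$ share the empty boundary they lie in a single bundle, so exactly $m$ minimal bundles of the $P_i$ merge into one and $b = \sum_i b_i - (m-1)$; hence $\dim \KP = n - b = \sum_i (n_i - b_i) + (m-1)$ agrees with the dimension of the claimed product.

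Two preliminary observations drive the construction. First, since tubes are connected, every tube of $P$ lies in a single Hasse component $P_i$, and for a subset $t \subseteq P_i$ being a tube of $P$ is equivalent to being a tube of $P_i$: the only bundle altered in passing from $P_i$ to $P$ is the minimal-element bundle, which every nonempty lower set already meets, so the ``filled'' condition is unchanged; the same comparison shows a subcollection of tubes inside $P_i$ has filled union in $P$ exactly when it does in $P_i$. Second, each full component $P_i$ is a connected filled lower set, hence a tube of $P$ with $P_i \neq P$ when $m \ge 2$.

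With this I would define the decomposition $\Phi(T) = (T_1, \dots, T_m, S)$, where $T_i = \{t \in T : t \subsetneq P_i\}$ collects the proper tubes of $T$ in $P_i$ and $S = \{i : P_i \notin T\}$. The first observation makes each $T_i$ a tubing of $P_i$. The crux is identifying the simplex factor, and this rests on the claim that a connected poset $P_i$ admits no covering tubing: if $\bigcup T_i = P_i$, the maximal tubes of $T_i$ would partition the connected set $P_i$ into two or more pairwise disjoint lower sets, which is impossible, the sole alternative $P_i \in T_i$ being barred. Consequently $\bigcup T = P$ holds exactly when every $P_i \in T$, so the tubing condition ``$T$ does not contain all of $P$'' is precisely $S \neq \emptyset$; thus $S$ ranges over all nonempty subsets of $[m]$, i.e.\ over the faces of $\Delta_{m-1}$, with $S = [m]$ (no component tubes, as for the empty tubing) corresponding to the whole simplex. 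The inverse sends $(T_1, \dots, T_m, S)$ to $\bigcup_i T_i \cup \{P_i : i \notin S\}$; one checks it yields a tubing of $P$ by verifying compatibility (tubes in distinct components are disjoint, and each $P_i$ nests the tubes of $T_i$) and that every subunion is filled, being a disjoint union of sets each filled in its own component. Finally $\Phi$ is order-preserving in the required sense: enlarging $T$ enlarges each $T_i$ and shrinks $S$, matching a smaller face in each $\KP_i$ and a smaller face of $\Delta_{m-1}$ under the reverse-containment convention of $\pi$.

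I expect the main obstacle to be pinning down the simplex factor rigorously — namely the covering claim that a connected component admits no tube-covering, together with the verification that ``not containing all of $P$'' translates exactly into the nonemptiness of $S$, and hence into the full nonempty-subset poset $L(\Delta_{m-1})$ rather than the Boolean lattice $2^{[m]}$. The remaining steps, especially the invariance of the ``filled'' condition under merging the minimal-element bundles across components, are routine once the effect on bundles is isolated.
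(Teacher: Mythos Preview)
Your argument is correct and follows the same decomposition as the paper: split a tubing of $P$ into its restriction to each component together with the record of which components appear as whole tubes. The paper states this decomposition in two lines and identifies the simplex factor by recognizing it as $\K H_m$ for the edgeless graph $H_m$; you instead identify the face lattice of $\Delta_{m-1}$ directly and supply the details the paper omits (invariance of the filled condition across the merged minimal bundle, the covering lemma that a connected component cannot be partitioned by proper tubes, and the explicit inverse with order preservation).
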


\begin{proof}
Any tubing of $P$ can be described as:
\begin{enumerate}
\item a listing of tubings $T_1 \in \KP_1, \ \ldots, \ T_k \in \KP_m$, \ and
\item for each component $P_i$ either including or excluding the tube $T_i = P_i$, as long as all tubes $P_i$ are not included. 
\end{enumerate}
The second part of this description is clearly isomorphic to a tubing of the edgeless graph $H_m$ on $m$ nodes.  But from \cite[Section 3]{dev2}, since $\K H_m$ is the simplex $\Delta_{m-1}$, we are done.
\end{proof}

\begin{thm} \label{t:const}
The poset associahedron $\KP$ is constructed inductively on the number of elements of $P$.   Choose a maximal element $x$ of a maximal length chain of $P$.
\begin{enumerate}
\item If bundle $\bu x$ is trivial, truncate  $\K(P  - x)$ to obtain $\KP$.
\item If bundle $\bu x$ is nontrivial, truncate $\K\left( P - (\bu x - x)\right) \times \Delta_{|\bu x - x|}$ to obtain $\KP$.
\end{enumerate}
\end{thm}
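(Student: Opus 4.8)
The plan is to prove the stronger statement that the truncation process produces a simple convex polytope of dimension $n-b$ whose face poset is isomorphic to $\pi(P)$, proceeding by induction on $n=|P|$; this simultaneously yields Theorem~\ref{t:combin}. By Proposition~\ref{p:discon} I may assume $P$ is connected, so that a maximal-length chain and its top element $x$ are available and the smaller polytopes $\K(P-x)$ and $\K(P-(\bu x - x))$ are covered by the inductive hypothesis. The dimension bookkeeping is immediate. In case~(1), since $x$ is maximal with $\bu x=\{x\}$, the poset $P-x$ has $n-1$ elements and exactly $b-1$ bundles, so $\dim\K(P-x)=n-b$. In case~(2), every element of $\bu x$ is maximal — otherwise an element above one of them would extend the maximal-length chain topped by $x$ — so $P':=P-(\bu x - x)$ has $n-k$ elements and $b$ bundles with $k:=|\bu x - x|$, giving $\dim(\K(P')\times\Delta_k)=(n-k-b)+k=n-b$. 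As truncation preserves dimension, the dimension claim follows once the face posets are matched.

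For case~(1) I would first sort the facets. A tube of $P$ avoiding $x$ is exactly a tube of $P-x$ not containing $\partial x$, hence already a facet of $\K(P-x)$; and a tubing of $P$ in which no tube contains $x$ is, by the filled condition applied to $x$, exactly a tubing $T$ of $P-x$ whose union omits an element of $\partial x$. These faces will survive the truncations (as the same tubings). The genuinely new facets of $\KP$ are the tubes $t\ni x$. For each such $t$ I would truncate the face $F_t$ of $\K(P-x)$ whose tubes are the connected components of $t\setminus\{x\}$ in $P-x$; one checks each such component is a tube and that $F_t$ is a legitimate face. Carrying out these truncations in order of increasing $\dim F_t$ (innermost tubes first), the effect of truncating $F_t$ is to replace it by a new facet isomorphic to the product of $F_t$ with its face figure, which should be identified with the recursive product $\K(t)\times\K(P/t)$; under this identification the faces on the new facet match the tubings of $P$ whose largest $x$-containing tube is $t$. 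Verifying that this assignment is an order isomorphism from $\pi(P)$ to the face poset of the truncated polytope is the bulk of the argument.

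Case~(2) reduces to case~(1) together with an analysis of the bundle. Since $\Delta_k$ is the poset associahedron of the antichain on the $k+1$ elements of $\bu x$, a face of the factor $\Delta_k$ records which bundle elements are singled out as their own tubes, while $\K(P')$ carries the ambient tubing with $x$ as a trivial-bundle representative. The bare product $\K(P')\times\Delta_k$ treats these two kinds of data independently, whereas in $P$ the filled condition couples them: a lower set meeting $\partial x$ must meet $\bu x$, and a single tube may absorb several bundle elements at once. I would truncate the faces of $\K(P')\times\Delta_k$ realizing exactly these coupled tubes, again in increasing order of dimension, and check that the resulting face poset is $\pi(P)$.

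The step I expect to be the main obstacle is twofold. First, the truncation sequence must be shown to be \emph{valid}: each face $F_t$, and each coupling face in case~(2), must persist as an un-shaved face of the partially truncated polytope until its turn, which amounts to proving that the collection of truncated faces is pairwise compatible in the precise sense that keeps iterated truncation polytopal and simple. Second, and harder, is the exactness of the correspondence in the bundle case: because the filled condition entangles the simplex directions with the poset directions in a way the product $\K(P')\times\Delta_k$ does not see, one must verify that the coupling truncations create \emph{all} tubings of $P$ and \emph{no} spurious faces, and that the new facets carry the correct recursive poset-associahedron structure. I expect this bundle coupling, rather than the dimension count or the $x$-free bookkeeping, to demand the most care.
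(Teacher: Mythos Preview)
Your skeleton matches the paper: induct on $|P|$, split on whether $\bu x$ is trivial, and realise $\KP$ by truncating the smaller polytope. Your identification of which faces to truncate in case~(1) --- the face of $\K(P-x)$ whose tubes are the connected components of $t\setminus\{x\}$ --- is exactly the paper's $x$-fill construction, and your dimension bookkeeping is correct.

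There are, however, two concrete gaps where your plan departs from what actually works.

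\textbf{Truncation order.} You truncate in increasing $\dim F_t$, glossed as ``innermost tubes first.'' The paper does the opposite in case~(1): it truncates in \emph{decreasing} size of the tube $\fil_x(T)$, i.e.\ largest tubes first (and in case~(2) it uses increasing size of the single tube $t$, again not a face-dimension order). This order is used essentially in the proof. To show that two new facets $f(t_1),f(t_2)$ with $t_1,t_2$ non-nested and intersecting do \emph{not} meet, the paper exhibits a larger tube $t_*\supset t_1\cup t_2$, notes that its truncation was performed earlier because $t_*$ is bigger, and observes that this earlier cut already removed the face where $f(t_1)$ and $f(t_2)$ would have met. With ``smallest first'' there is no reason $F_{t_*}$ precedes $F_{t_1}$ or $F_{t_2}$ --- the dimension of $F_t$ is governed by the number of components of $t\setminus\{x\}$, not by $|t|$ --- so the separation argument is unavailable.

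\textbf{Verification via a facet product $\K(t)\times\K(P/t)$.} You plan to identify each new facet with a product of smaller poset associahedra. The paper never does this, and no quotient or ``reconnected complement'' $P/t$ is defined for posets; it is not clear one exists with the right tubing combinatorics. Instead the paper verifies the face-poset isomorphism directly at the level of facet incidences: a collection $\{t\}$ of tubes is a tubing of $P$ iff the facets $\{f(t)\}$ have nonempty intersection. Both directions are argued by tracking, truncation by truncation, which facet intersections are created and which are destroyed. This bypasses any product description of facets.

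Finally, case~(2) is not reduced to case~(1) in the paper; it has its own truncation scheme on $\K(P-(\bu x-x))\times\Delta_{|\bu x-x|}$ (with faces indexed by pairs $(T,B)$), its own order, and its own forward/backward intersection argument. The bundle coupling you flag as the hard part is indeed the most delicate step, but it is handled by that direct analysis rather than by a reduction.
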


\noindent This immediately implies the combinatorial result of Theorem~\ref{t:combin}.   The following is a notable consequence:

\begin{prop} \label{p:trunc}
There are different ways to construct $\KP$, based on the possible choices of maximal elements in the recursive process.
\end{prop}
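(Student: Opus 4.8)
The plan is to read this off as a confluence statement for the recursive construction of Theorem~\ref{t:const}, using the intrinsic combinatorial description of $\KP$ furnished by Theorem~\ref{t:combin}. At each stage of the recursion one selects a maximal element $x$ on a maximal length chain of the current poset, and in general a poset possesses several such elements; thus genuinely distinct sequences of truncations are available, and the substantive task is to show that every such sequence terminates at the same polytope. First I would fix two arbitrary admissible choice sequences and run the induction of Theorem~\ref{t:const} along each, noting that nothing in that construction depends on the chosen $x$ beyond its being a maximal element of a maximal length chain, together with whether its bundle $\bu x$ is trivial or nontrivial.

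The key step is that, by Theorem~\ref{t:combin}, each of these sequences produces a polytope whose face poset is isomorphic to $\pi(P)$, the tubings of $P$ ordered by reverse containment. Since $\pi(P)$ is defined purely in terms of $P$ and is manifestly independent of any selection of maximal elements, the two resulting polytopes have isomorphic face posets. Because the poset associahedron is a simple convex polytope, and a simple polytope is determined up to combinatorial isomorphism by its face poset, the two construction paths yield combinatorially equivalent realizations of $\KP$. As the choices were arbitrary, all admissible paths converge to the same poset associahedron.

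The main obstacle is ensuring that the inductive step of Theorem~\ref{t:const} is truly choice-agnostic, i.e.\ that the identification of the face poset with $\pi(P)$ succeeds for every admissible $x$ rather than for one favored selection. Concretely, in the trivial-bundle case one must confirm that truncating $\K(P - x)$ along the face dictated by the tubes of $P$ that contain $x$ reproduces the face poset $\pi(P)$, and in the nontrivial-bundle case that the analogous truncation of $\K\!\left(P - (\bu x - x)\right) \times \Delta_{|\bu x - x|}$ does the same. In both cases the truncated face is determined solely by the combinatorics of the tubes through $x$, so once this bookkeeping is verified uniformly over all valid choices of $x$ — which is precisely the content of the proof of Theorem~\ref{t:const} in Section~\ref{s:proof} — the asserted independence of the construction path follows immediately.
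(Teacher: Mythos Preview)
Your argument is correct and matches the paper's treatment: the paper offers no separate proof of this proposition, simply recording it as a ``notable consequence'' of Theorem~\ref{t:const}, whose proof in Section~\ref{s:proof} establishes the face-poset isomorphism with $\pi(P)$ for \emph{any} admissible choice of maximal element $x$. Your explicit appeal to Theorem~\ref{t:combin} and the fact that the face poset determines the combinatorial type is exactly the reasoning that underlies the paper's one-line attribution; the only superfluous remark is that simplicity is not needed for a polytope to be determined up to combinatorial equivalence by its face poset.
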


In certain situations, altering the underlying poset does not affect the polytope.  This occurred in Figures~\ref{f:2d-poset}(b) and~\ref{f:2d-poset-2}(b), and can be presented as

\begin{cor} \label{c:max}
Let $x$ be a maximal element of a maximal chain of $P$ such that $\bu x$ is trivial. If $\partial x$ is connected, then $\KP = \K (P - x)$.
\end{cor}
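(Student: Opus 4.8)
The plan is to run the statement through Theorem~\ref{t:const}(1): since $\bu{x}$ is trivial, $\KP$ is obtained from $\K(P - x)$ by a single truncation, and the goal is to show that the hypothesis ``$\partial x$ connected'' forces this truncation to be along a facet, and hence to be combinatorially trivial. Once the truncated face is shown to have codimension one, truncation leaves the polytope unchanged and the identity $\KP = \K(P-x)$ follows at once.

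First I would identify which face of $\K(P-x)$ is truncated. The tubes of $P$ that are missing from $P-x$ are precisely those containing $x$; and since $\bu{x} = \{x\}$, a lower set acquires $x$ as soon as it contains $\partial x$ (this is forced by the filling condition), so these new tubes are exactly the filled connected lower sets containing $\partial x \cup \{x\}$, among which there is a unique smallest one. Deleting $x$ carries this new facet of $\KP$ to a face of $\K(P-x)$ indexed by the minimal tube(s) of $P-x$ containing $\partial x$. In general, if $\partial x$ splits into connected components $C_1, \ldots, C_c$, the truncated face is indexed by the tubing formed from these components (after filling), a face of codimension $c$.

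Next I would apply the hypothesis. When $\partial x$ is connected we have $c = 1$, so the truncated face is indexed by a single tube of $P-x$, which by Theorem~\ref{t:combin} is a facet (codimension one) of $\K(P-x)$; truncation along a facet is combinatorially trivial, giving $\KP = \K(P-x)$. This also exposes why connectivity is the right hypothesis: for $c \geq 2$ the truncated face has codimension $c \geq 2$ and the truncation genuinely alters the polytope. To certify that nothing new appears I would set up the deletion map $t \mapsto t \setminus \{x\}$ and argue it is an isomorphism of face posets $\pi(P) \cong \pi(P-x)$, with inverse restoring $x$ to every tube that contains $\partial x$. Connectivity of $\partial x$ enters exactly in checking that $t \setminus \{x\}$ stays connected, since $x$ attaches only to elements of $\partial x$, so any path through $x$ can be rerouted inside the connected set $\partial x$.

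The hard part will be promoting this tube bijection to a bijection of \emph{tubings}, i.e.\ verifying that deletion respects the full tubing conditions in both directions---pairwise nesting or disjointness, and, more delicately, the requirement that every subunion remain filled. The dangerous configuration is a family of pairwise disjoint tubes of $P-x$ whose union covers $\partial x$ although no single member does: such a family would have no valid preimage in $P$, since its union fails to be filled once $x$ is present, and I must show that the connectivity of $\partial x$ forbids it. Finally I would dispose of the two degenerate cases separately: $\partial x = \emptyset$, where $x$ is an isolated minimal element and Proposition~\ref{p:discon} applies, and $\partial x = P - x$, where the smallest tube containing $x$ is all of $P$ and there is simply no face to truncate.
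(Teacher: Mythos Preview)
Your proposal is correct, and its second half---the deletion map $t \mapsto t \setminus \{x\}$ together with the verification that it extends from tubes to tubings---is exactly the paper's argument. The paper checks the two cases $x \in t$ and $x \notin t$ in one sentence each and then simply asserts that the tube bijection extends to tubings; you go further by isolating the only nontrivial point in that extension, namely the ``dangerous configuration'' of pairwise disjoint tubes of $P-x$ jointly covering $\partial x$, and explaining why connectivity of $\partial x$ (together with the tubes being lower sets) forbids it. So on this half you are doing the paper's proof, only more carefully.

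Your opening, truncation-based paragraph is a different and also valid route that the paper does not take. One phrasing issue to fix: Theorem~\ref{t:const}(1) is not a \emph{single} truncation but the iterated procedure of Section~\ref{s:proof}, which truncates one face of $\K(P-x)$ for each pairwise-disjoint tubing $T$ with $\fil_x(T)$ a tube of $P$---equivalently, one face per tube of $P$ containing $x$, and there may be many such tubes. The correct version of your claim is that when $\partial x$ is connected, every such $T$ must consist of a single tube (disjoint lower sets covering the connected set $\partial x$ would disconnect it), so every face scheduled for truncation is already a facet and every step is combinatorially vacuous. With that adjustment the truncation argument stands on its own; it buys a more geometric explanation of why connectivity is exactly the right hypothesis, at the cost of invoking the heavier machinery of Section~\ref{s:proof}.
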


\begin{proof}
We show that $t$ is a tube of $P$ if and only if $t-x$ is a tube of $P - x$.  If $x \notin t$, then $\partial x \notin t$, and $t$ has the properties of a tube in both $P$ and $P-x$, or in neither. On the other hand, if $x \in t$, then $\partial x \in t$, and so $t$ is connected if and only if $t-x$ is connected. Extending this isomorphism of tubes to tubings preserves this containment.
\end{proof}

\begin{figure}[h]
\includegraphics[width=\textwidth]{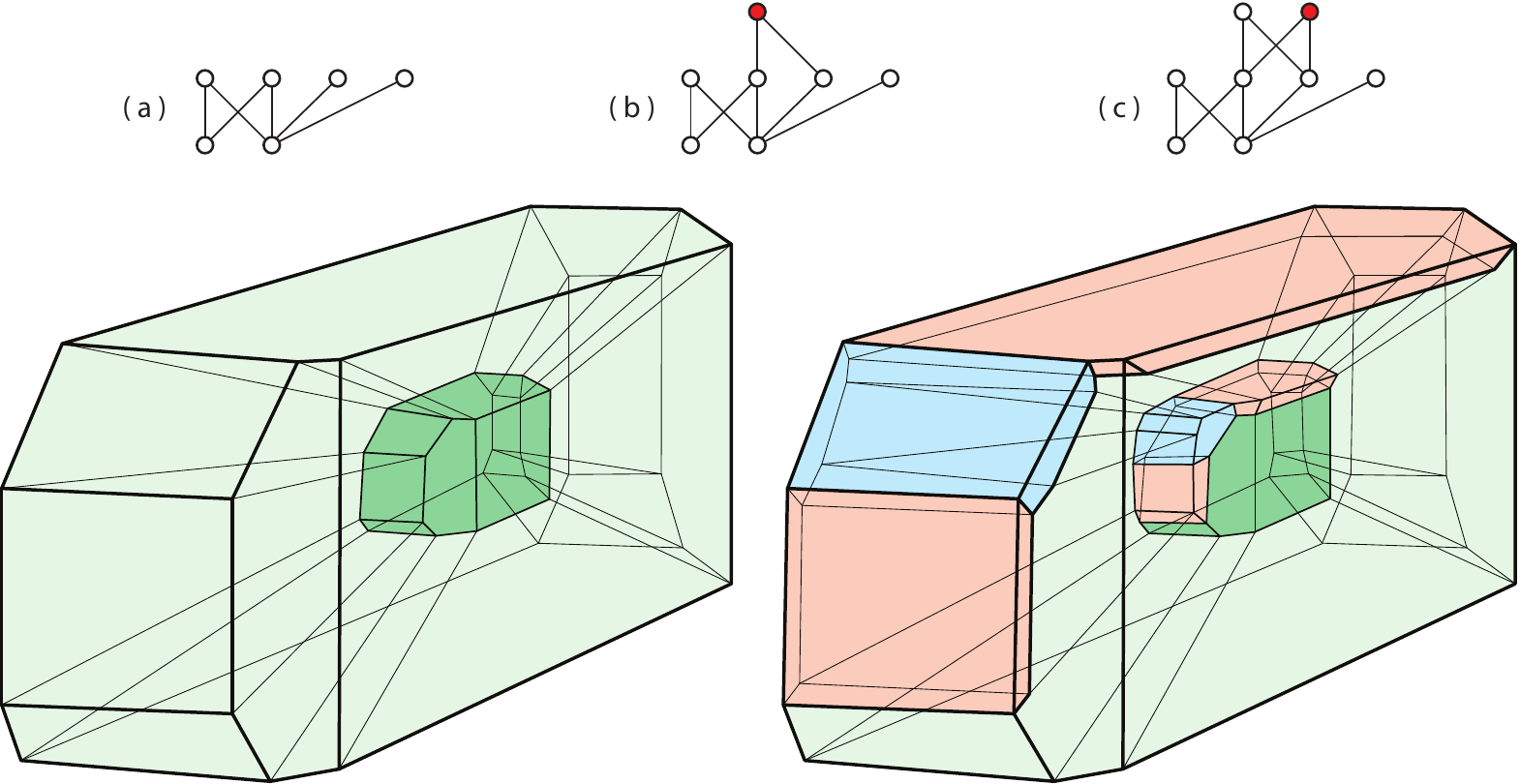}
\caption{Construction of a 4D poset associahedron.}
\label{f:4d}
\end{figure}

\begin{exmp}
A 4D case for Theorem~\ref{t:const} is provided in Schlegel diagram on the right side of Figure~\ref{f:4d}, where the poset steps are drawn above.  Part (a) begins with the poset $P_*$ of Figure~\ref{f:3d-exmps}(b).  In Figure~\ref{f:4d}(b), by Corollary~\ref{c:max}, adding the new maximal element to this poset does not change the structure of the polytope.  Finally, part (c) shows the addition of a nontrivial bundle, now with two elements.  According to Theorem~\ref{t:const}, we first consider the 4D polytope $\KP_* \times \Delta_1$, the left Schlegel diagram of Figure~\ref{f:4d}.  Then truncate certain faces (first the two blue chambers, then the four orange ones) to obtain the 4D poset associahedron drawn on the right.
\end{exmp}

\subsection{}

We close this section with a corollary of Proposition~\ref{p:trunc} as it pertains to the classical associahedron $K_n$.  Interestingly, this construction of the associahedron is novel, though examples of special cases have appeared in different parts of literature, as referenced below.

\begin{prop}
The poset associahedron of the \emph{zigzag} poset with $2n-1$ elements yields the classic associahedron $K_{n+1}$. In particular, the associahedron $K_{n+1}$ is obtained by truncations of codimension two faces of $K_{p+1} \times \Delta_1 \times K_{q+1}$, where $n = p+q$ and $p, q \geq 1$.
\end{prop}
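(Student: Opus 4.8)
The plan is to prove the two assertions in tandem, since the second is really the inductive engine that verifies the first. The statement describes the \emph{zigzag} poset on $2n-1$ elements: the Hasse diagram alternating up-down-up-$\cdots$, with $n$ minimal elements and $n-1$ maximal elements (each maximal element covering its two neighboring minimal elements). First I would pin down the combinatorial data: this poset has $2n-1$ elements; its minimal elements all share the empty boundary, so they form a single bundle, and each maximal element sits alone in its own trivial bundle atop a distinct pair of minima. Counting bundles, we get $1 + (n-1) = n$ bundles. By Theorem~\ref{t:combin}, the dimension of $\KP$ is $(2n-1) - n = n-1$, which matches $\dim K_{n+1} = n-1$. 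Establishing this dimension count is the easy warm-up that anchors the induction.

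The heart of the argument is the inductive construction via Theorem~\ref{t:const}. I would induct on $n$, with the base case $n=1$ (a single point, yielding a point $K_2$) or $n=2$ (the zigzag $\bullet\text{-}\bullet\text{-}\bullet$, giving a segment $K_3$) handled directly. For the inductive step, split the zigzag at an interior maximal element $x$ with $p$ minima to its left and $q$ to its right, where $p+q=n$ and $p,q\geq 1$. The key observation is that $x$ is a maximal element of a maximal-length chain and that $\bu x = \{x\}$ is trivial. Applying case (1) of Theorem~\ref{t:const}, $\KP$ is obtained by truncating $\K(P-x)$. Now $P-x$ is a \emph{disconnected} poset: removing $x$ severs the zigzag into a left zigzag $P_L$ on $2p-1$ elements and a right zigzag $P_R$ on $2q-1$ elements. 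By Proposition~\ref{p:discon}, $\K(P-x) \cong \KP_L \times \KP_R \times \Delta_1$, and by the inductive hypothesis $\KP_L \cong K_{p+1}$ and $\KP_R \cong K_{q+1}$. Thus $\K(P-x) \cong K_{p+1} \times \Delta_1 \times K_{q+1}$, and $K_{n+1}$ is recovered by the appropriate truncations, proving precisely the second sentence of the statement.

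The main obstacle, and the step requiring genuine care, is identifying \emph{which} faces of $K_{p+1} \times \Delta_1 \times K_{q+1}$ get truncated and verifying that these are exactly the codimension-two faces claimed. The truncation in Theorem~\ref{t:const}(1) introduces precisely the new tubes of $P$ that contain $x$; I would enumerate these and check that each corresponds to a codimension-two face of the product polytope. Concretely, a tube through $x$ must (by the filled and connected conditions) include $\partial x$ together with a contiguous block of minima and maximal elements straddling the position of $x$, and such a tube meets both factors $\KP_L$ and $\KP_R$ nontrivially while also constraining the $\Delta_1$ coordinate recording whether the left or right block is the ``full'' component. Matching this bookkeeping against the face structure of the product is where the real work lies: I expect to show that the truncated faces are indexed by pairs (a face of $K_{p+1}$, a face of $K_{q+1}$) of complementary codimensions summing to two, consistent with the classical Tamari/associahedron recursion $K_{n+1} \leftarrow \{K_{p+1}\times K_{q+1}\}$. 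Finally, I would remark that the associahedron's own product-truncation recursions appearing in the literature are the special cases $p=1$ or $q=1$, tying the novel construction back to known examples as the statement promises.
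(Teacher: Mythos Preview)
Your scaffold for the second sentence is essentially the paper's: remove a maximal element $x$, apply Theorem~\ref{t:const}(1) (trivial bundle), and invoke Proposition~\ref{p:discon} on the two zigzag components of $P-x$.  The codimension-two claim then falls out because a disjoint tubing $T$ with $\fil_x(T)$ a tube must consist of exactly one tube on each side of $x$, hence has two tubes and so labels a codimension-two face.

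There is, however, a genuine gap in your treatment of the \emph{first} sentence.  In the inductive step you write that ``$K_{n+1}$ is recovered by the appropriate truncations,'' but Theorem~\ref{t:const} only guarantees that what is recovered is $\KP$.  You have not independently shown $\KP\cong K_{n+1}$; that is precisely the claim under induction.  Knowing that $\KP$ is a truncation of $K_{p+1}\times\Delta_1\times K_{q+1}$ does not, on its own, identify the result as an associahedron, so the two sentences cannot bootstrap each other the way you propose.

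The paper closes this gap with a one-line observation you do not use: the zigzag on $2n-1$ elements is exactly the face poset of the path graph $G$ on $n$ nodes.  The earlier Proposition ($\KG\cong\KP$ for $P$ the face poset of $G$) then gives $\KP\cong\KG$, and the fact that the path's graph associahedron is $K_{n+1}$ is classical.  With the first sentence secured outright for every $n$, the second follows immediately from your own disconnection argument --- no induction needed.  If you prefer to avoid citing the graph-associahedron result, you must supply a direct bijection between tubings of the zigzag and faces of $K_{n+1}$ (say, via bracketings of $n+1$ letters); that bijection is the missing step in your proof.
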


\begin{proof}
The poset $P$ of a path $G$ with $n$ nodes is the zigzag poset $2n-1$ elements; the tubings on $P$ resulting in $\KP$ are in bijection with tubes on the graph $G$.  The enumeration of the different types of truncation comes from removing a maximal element of $P$ and using Theorem~\ref{t:const}(b).
\end{proof}

\begin{rem}
The particular construction of $K_{n+1}$ from $K_n  \times \Delta_1 \times K_2\, \simeq \, K_n  \times \Delta_1 $ appears in another form in the work by Saneblidze and Umble \cite{su} on diagonals of associahedra.
\end{rem}

\begin{figure}[h]
\includegraphics{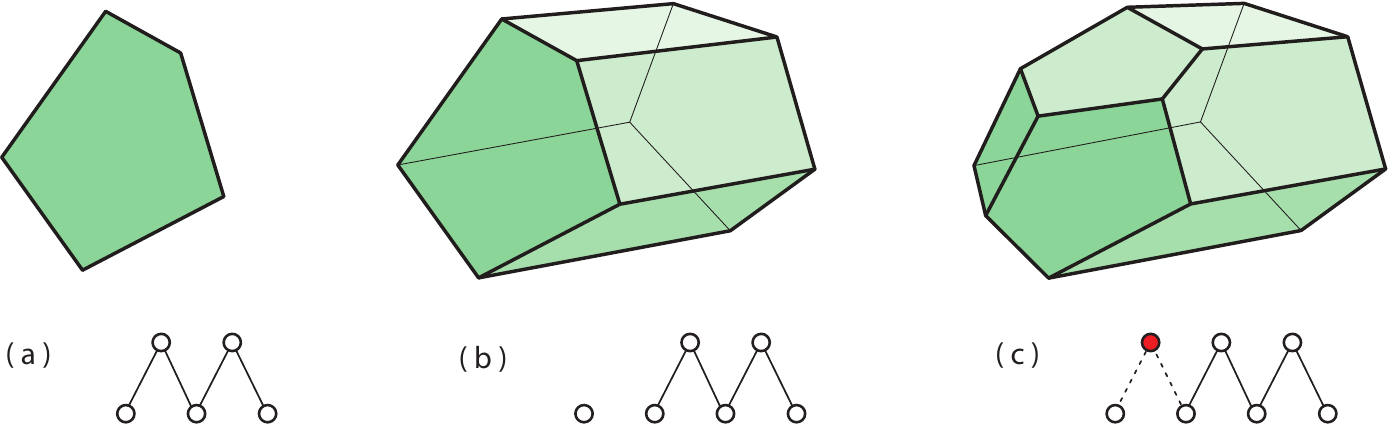}
\caption{The associahedron $K_5$ from a pentagonal prism, $K_4 \times \Delta_1 \times K_2$.}
\label{f:k5-alt1}
\end{figure}

\begin{exmp}
Figure~\ref{f:k5-alt1} considers one construction of the 3D associahedron:  Part (a) begins with the pentagon from Figure~\ref{f:2d-poset}(a), and (b) adds an extra disconnected element.  By Proposition~\ref{p:discon}, the result is the product with $\Delta_1$, a pentagonal prism.  Part (c) connects up the poset with a trivial bundle; two edges of the prism are truncated according to the proof of Theorem~\ref{t:const} to yield the associahedron.  Indeed, Figure~\ref{f:3d-exmps}(c) is formed in an identical manner.
\end{exmp}

\begin{figure}[h]
\includegraphics{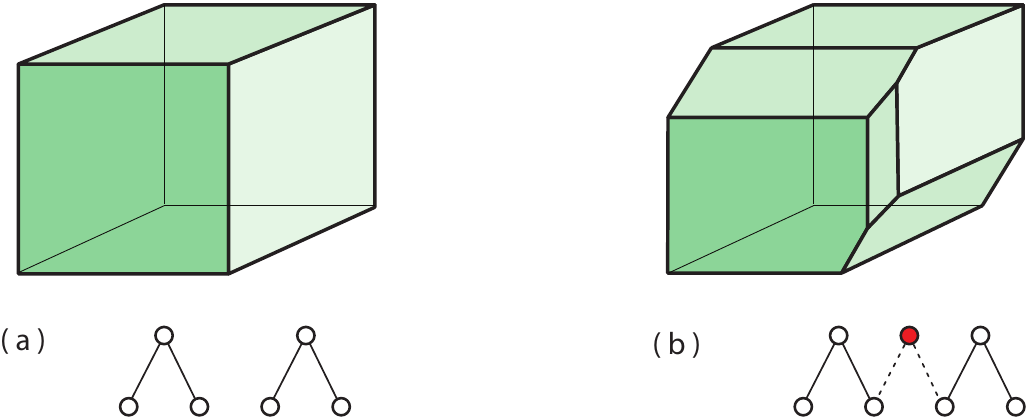}
\caption{The associahedron $K_5$ from a cube, $K_3 \times \Delta_1 \times K_3$.}
\label{f:k5-alt2}
\end{figure}

\begin{exmp}
Figure~\ref{f:k5-alt2} considers another assembly of the 3D associahedron:  Each disconnected component yields an interval, and together (by Proposition~\ref{p:discon}), the result is a cube (a), the product of three intervals.  Part (b) connects up the poset with a trivial bundle, and three edges of its edges are truncated according to the proof of Theorem~\ref{t:const}.  This construction appears in \cite{bv}, motivated by truncating codimension two faces of cubes.
\end{exmp}

\begin{figure}[h]
\includegraphics{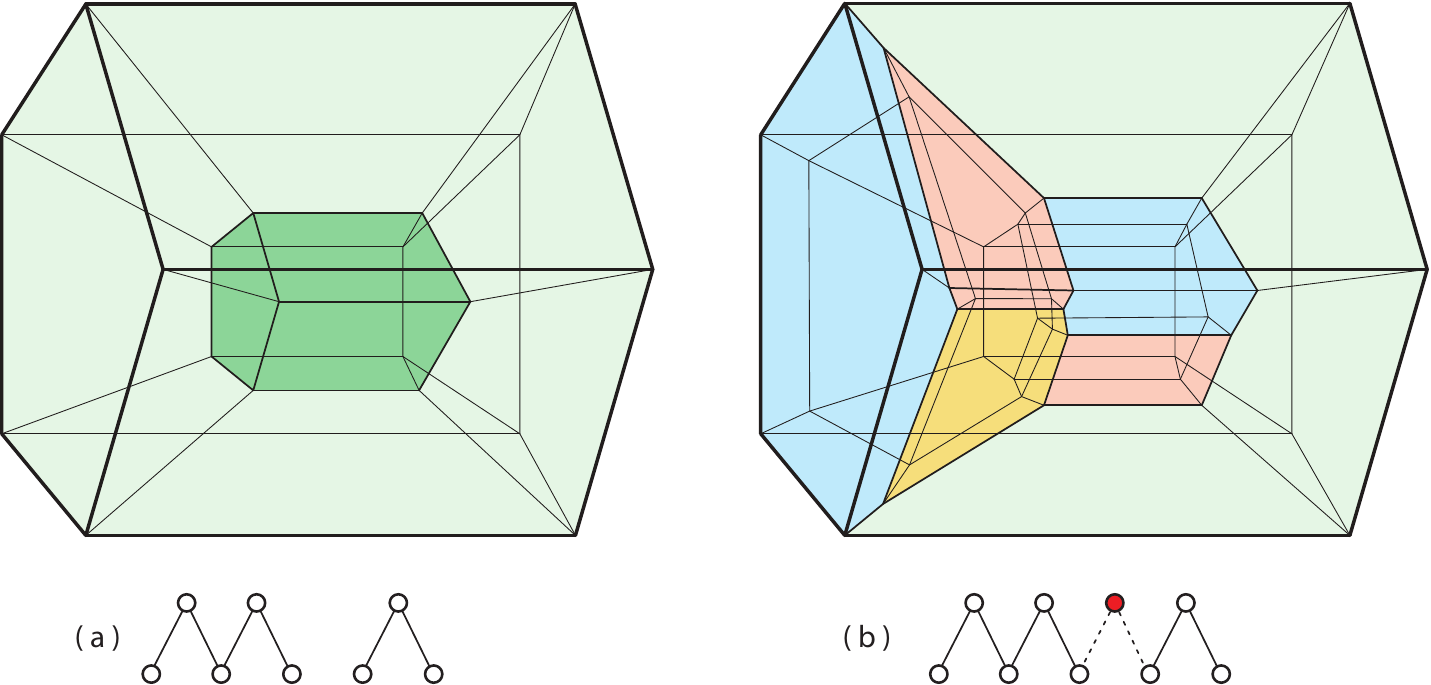}
\caption{The associahedron $K_6$ from $K_4 \times \Delta_1 \times K_3$.}
\label{f:k6-poset}
\end{figure}

\begin{exmp}
In a similar vein, Figure~\ref{f:k6-poset} obtains the 4D associahedron $K_6$ (right side) from truncating five codimension two faces of $K_4 \times \Delta_1 \times K_3$ (left side).  The order of truncation is important:  first the two blue faces, then two orange faces, and finally one yellow face.
\end{exmp}

%
%
\section{Family of Associahedra} \label{s:relation}
\subsection{}

The $(n-1)$-dimensional permutahedron $\per_n$ is the convex hull of the points formed by the action of a finite reflection group on an arbitrary point in Euclidean space. The classic example is the convex hull of all permutations of the coordinates of the Euclidean point $(1, 2, \dots , n)$. Changing edge lengths while preserving their directions results in the \emph{generalized permutohedron}, as defined by Postnikov \cite{pos}. An important subclass of these is the \emph{nestohedra} \cite{zel}: Nestohedra have the feature that each of their faces corresponds to a specific combinatorial set, and the intersection of two faces corresponds to the union of the two sets. For a given set $S$, each nestohedron $N(B)$ is based upon a given building set $B$, whose elements are known as tubes, where $B$ must contain all the singletons of $S$ and must also contain the union of any two tubes whose intersection is nonempty. 

\begin{prop}
\label{p:nesto}
All nestohedra can be obtained as poset associahedra, with posets of two ranks, with no bundles of size greater than one.
\end{prop}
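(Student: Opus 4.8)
The plan is to construct, from a building set $B$ on a ground set $S$, a two-rank poset $P$ whose tubings biject with the nested sets of $B$, in a way compatible with the face-poset orderings. The natural candidate for $P$: let the minimal (rank-zero) elements of $P$ be the singletons of $S$, i.e.\ one minimal element for each point of $S$, and let the maximal (rank-one) elements of $P$ be indexed by the \emph{non-singleton} elements of $B$, where a rank-one element $b$ covers exactly the minimal elements corresponding to the points of $S$ lying in the set $b$. Since $B$ contains every singleton, every point of $S$ is accounted for at rank zero, and distinct building-set elements give distinct boundaries (two sets with the same underlying point set are equal), so all bundles have size one — this is the ``no bundles of size greater than one'' clause. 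The dimension count also matches: $P$ has $|S| + (|B| - |S|) = |B|$ elements in one bundle of minimals plus $|B|-|S|$ trivial bundles, hence $|B|$ elements and $1 + (|B|-|S|)$ bundles, giving dimension $|S|-1$, exactly $\dim N(B)$.

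Next I would identify the tubes of $P$ with the elements of $B$. A connected lower set $L$ of $P$ is determined by which minimal elements it contains (a subset $A \subseteq S$) together with which rank-one elements above them it contains; connectivity in the Hasse diagram forces $A$ to be ``$B$-connected'' in Zelevinsky's sense, and the \emph{filled} condition forces $L$ to contain a rank-one element $b$ whenever it contains all of $\partial b$, i.e.\ whenever $A \supseteq b$. I claim the filled connected lower sets are exactly: singletons $\{s\}$ at rank zero (these are already filled since no $\partial b$ is empty for non-singleton $b$), and the lower sets whose minimal part $A$ is itself an element of $B$ together with \emph{all} rank-one elements $b \in B$ with $b \subseteq A$. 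So tubes of $P$ correspond bijectively to elements of $B$. The key point to verify here is that ``filled + connected lower set'' matches exactly the closure of a $B$-connected set under sub-building-set elements — essentially the defining closure property of building sets ($B$ contains the union of any two overlapping tubes) is what makes this closure well defined and lands back in $B$.

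Then I would check that a collection of tubes forms a tubing of $P$ — pairwise nested or disjoint, with every subunion filled — if and only if the corresponding subset of $B$ is a nested set: the nestedness/disjointness conditions translate directly, and the ``every subunion is filled'' condition is automatic on the nose because a union of elements of $B$, once it is connected, is again in $B$ by the building-set axiom, so it is already a tube. Finally, both orderings are by reverse inclusion (faces of $N(B)$ by reverse inclusion of nested sets, faces of $\KP$ by reverse containment of tubings), so the bijection is an isomorphism of face posets; Theorem~\ref{t:combin} then identifies $\KP$ with $N(B)$ as polytopes.

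The main obstacle I anticipate is pinning down the precise dictionary between ``filled connected lower set of $P$'' and ``$B$-connected subset of $S$ closed under taking sub-elements of $B$,'' and in particular checking that the subtle difference between Zelevinsky's convention (building sets need not contain $S$ itself, tubings may or may not include the top) lines up with the convention here that a tubing may not contain all of $P$ — one has to be careful about whether $S \in B$ and how that corresponds to the excluded top tube. A secondary nuisance is bookkeeping when $B$ has connected components (a disconnected building set), where one should either invoke Proposition~\ref{p:discon} or restrict to the connected case without loss of generality.
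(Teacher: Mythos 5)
Your construction is essentially the paper's: its poset $P_B$ likewise has minimal elements $S$ and a trivial-bundle maximal element for each member of $B$ covering exactly the points it contains (the paper keeps maximal elements for singletons of $B$ as well, which changes nothing material), and the proof is the same dictionary between tubes and elements of $B$, and between tubings and nested sets, ordered by reverse inclusion. One sentence of yours has the logic backwards, however: the ``every subunion is filled'' condition is \emph{not} automatic. If $b_1,\dots,b_k$ are pairwise disjoint elements of a putative nested set whose union $A$ happens to lie in $B$, then the union of the corresponding tubes contains the boundary of the maximal element for $A$ but not that element, hence is \emph{unfilled} --- so this condition is precisely where the nontrivial nested-set axiom (no union of two or more disjoint members lies in $B$) gets encoded, which is exactly the point the paper's proof singles out. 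With that sentence corrected, your argument coincides with the paper's.
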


\begin{proof}
Given a building set $B$ of a set $S$, we describe a ranked poset $P_B$, with exactly two ranks, whose tubes are in bijection with $B$ and whose tubings are in bijection with the nested sets of $B$.   The poset $P_B$ has minimal elements given by set $S$, and has maximal elements (each a trivial bundle) given by set $B$, each having boundary exactly the minimal elements that it contains.  The tubes of $P_B$ are all the connected lower sets of $P_B$ generated by a single maximal element of $P_B$ (since each such lower set is automatically filled).  And if a subset $T$  of tubes is not filled, then $T$ must be the boundary of a maximal element in $P_B$, and thus the collection of minimal elements in $T$ make up an element of $B$.
The ordering of nested sets by reverse inclusion corresponds to the ordering of tubings by reverse inclusion, so the nestohedron $N(B)$ is isomorphic to our polytope $\KP_B$. 
\end{proof}

Note that $P_B$ is one of many posets whose polytope is $N(B)$; many more posets (with at most two ranks) can be found.   Start with set $S$ and create any number of new maximal elements, each of which covers some of $S$, where each maximal element is a trivial bundle.  The set of tubes of such a poset $P$ will yield a building set $B$ on the set of minimal elements of $P$, due to the definition of a tube as a connected filled lower set.  This amounts to choosing a subset of the power set of $S$ (a \emph{hypergraph} on $S$), and thus the process of building $\KP$ for such a poset is akin to constructing the hypergraph polytope \cite{hyper}.

\begin{figure}[h]
\includegraphics{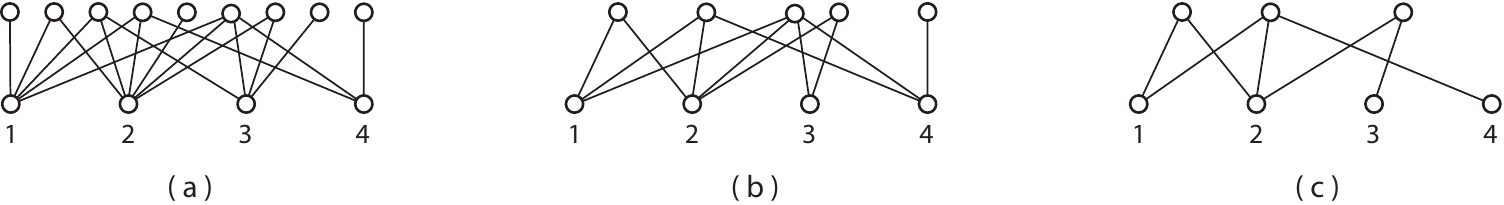}
\caption{Examples of different posets resulting in identical poset associahedra.}
\label{f:nested-many}
\end{figure}

\begin{exmp}
Given set $S = \{1,2,3,4\}$ and building set $B = \{\{1\}, \{2\}, \{3\}, \{4\}, \{12\}, \{23\}, \linebreak \{234\}, \{124\}, S\}$, Figure~\ref{f:nested-many}(a) shows the poset $P_B$ constructed in the proof of Proposition~\ref{p:nesto}.  Moreover, all the posets in this figure result in identical poset associahedra.
\end{exmp}

\subsection{}

Although poset associahedra contain nestohedra, they are a different class than generalized permutohedra.  For instance, Figure~\ref{f:3d-exmps}(b) shows a 3D polytope which has an octagonal face, something not possible for generalized permutohedra.  Figure~\ref{f:octagon} below shows this octagon in detail.  Similarly, the 4D example in  Figure~\ref{f:4d} is not a nestohedron as well.

\begin{figure}[h]
\includegraphics{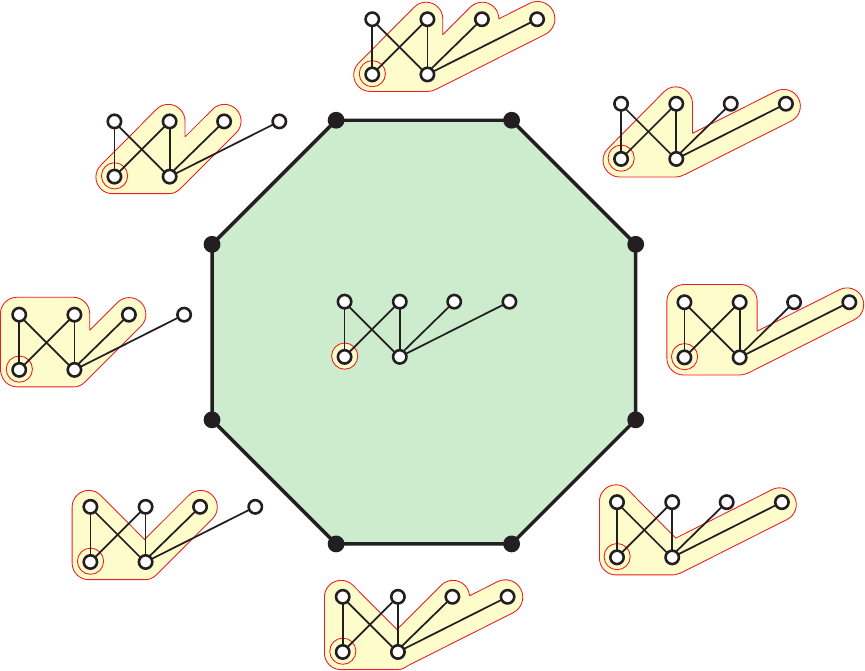}
\caption{Octagonal face of the polyhedron in Figure~\ref{f:3d-exmps}(b).}
\label{f:octagon}
\end{figure}

All graph associahedra and nestohedra are obtained from two rank posets, and it is natural to ask whether all poset associahedra can be obtained from some two rank posets.  For instance, the three rank poset of Figure~\ref{f:2d-poset-2}(a) yields the same $\KP$ as the two rank poset of Figure~\ref{f:2d-poset}(a).  The following shows that higher ranks indeed hold deeper structure.

\begin{thm}
There exists poset associahedra $\KP$ which cannot be found as $\KP'$, for any poset $P'$ of two rank.
\end{thm}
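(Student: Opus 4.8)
The plan is to exhibit an explicit poset $P$ of three ranks whose poset associahedron $\KP$ cannot be realized by any two-rank poset, and to detect this obstruction at the level of the face lattice — specifically, by looking at a single facet (or low-codimension face) of $\KP$ and showing it has combinatorial type incompatible with everything producible from two-rank data. The natural candidate is a variant of the poset in Figure~\ref{f:2d-poset-2}(a) built up one more dimension, or the poset $P_X$ coming from a small CW-complex as in Figure~\ref{f:cello}; I would choose whichever gives the cleanest facet analysis. First I would isolate the key structural fact: a facet of $\KP$ corresponds to a single tube $t \subsetneq P$, and by Theorem~\ref{t:const} together with Proposition~\ref{p:discon}, the facet itself is (combinatorially) the poset associahedron of a smaller poset — roughly, the ``inside'' $\mathcal K(t)$ crossed with the ``outside'' poset obtained by contracting $t$ to a point. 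So the set of facet-types achievable by $\KP$ for two-rank $P$ is exactly the set of products $\mathcal K(P_1)\times \mathcal K(P_2)\times \Delta_m$ with $P_1, P_2$ of rank $\le 2$.

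The second step is to pin down an invariant that separates two-rank poset associahedra from higher ones. The most promising is a local count at vertices and edges: in a two-rank poset, every tube is a lower set generated by choosing some maximal elements together with (necessarily) all minimal elements below them, so tubes are governed by a \emph{hypergraph} on the minimal elements (as the paper notes after Proposition~\ref{p:nesto}). This constrains which small polygons can appear as 2-faces — for instance the octagon of Figure~\ref{f:octagon} already shows three-rank behavior, but since octagons can in principle arise in other ways I would instead track the \emph{pattern of 2-faces around a vertex}, i.e. the vertex figure, and show that the specific combination occurring in my chosen $\KP$ forces, in any putative two-rank $P'$, a maximal element whose boundary is simultaneously required to be connected and disconnected, or a bundle structure that violates the ``filled'' axiom. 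Concretely: pick a vertex $v$ of $\KP$ corresponding to a maximal tubing; compute its link; then argue that reproducing this link from a two-rank poset would require a tube whose defining lower set cannot be expressed purely in terms of minimal-element incidences.

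The main obstacle — and the step I would spend the most care on — is the ``for any poset $P'$'' quantifier: ruling out all two-rank realizations, not just the obvious ones. The difficulty is that a single polytope can arise from many different posets (Proposition~\ref{p:trunc}, and the whole discussion around Figure~\ref{f:nested-many}), so a naive ``the poset is recoverable from the polytope'' argument fails. I would handle this by extracting a purely polytopal invariant that is \emph{provably} shared by all two-rank poset associahedra — my candidate is a restriction on the f-vector or flag data of 2-faces and their adjacencies, derived from the hypergraph description of two-rank tubes — and then checking by direct (finite) computation that my three-rank example $\KP$ violates it. This reduces the universally-quantified claim to one inequality about one explicit polytope. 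If the f-vector obstruction proves too weak, the fallback is to use the facet-recursion of the first paragraph as an induction: a minimal counterexample $\KP$ would have all facets realizable by two-rank posets, and I would derive a contradiction by reassembling a two-rank realization of $\KP$ itself from two-rank realizations of its facets along shared ridges — showing the gluing data forces a third rank. I expect the explicit-example-plus-f-vector route to be the shorter path to a complete argument.
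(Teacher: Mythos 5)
Your proposal correctly identifies both the right shape of the argument (one explicit three-rank example plus a finite check against all two-rank competitors) and the central difficulty (the universal quantifier over $P'$, given that many posets yield the same polytope). However, there is a genuine gap: the invariant that is supposed to do all the work is never produced, and your primary candidate for it would fail. The paper's own proof takes the 4D example of Figure~\ref{f:4d}, with $f$-vector $(68,136,88,20)$, and reports that roughly $500$ two-rank posets (with a nontrivial bundle) produce polytopes with exactly that $f$-vector; each had to be checked individually for non-equivalence in SAGE. So an ``f-vector or flag data of 2-faces'' obstruction of the kind you hope for does not separate the example from the two-rank world, and no clean polytopal invariant is exhibited in the paper either. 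Your vertex-link idea is plausible in spirit but is left entirely unexecuted --- you never identify a concrete link, never prove any structural restriction on links of two-rank poset associahedra, and never verify the violation.

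The idea you are missing is the one that makes the brute-force route legitimate: the dimension formula $\dim \KP = n - b$ from Theorem~\ref{t:combin} bounds the combinatorics of any two-rank $P'$ realizing a polytope of fixed dimension (all minimal elements form a single bundle, so the number of minimal elements and the excess of maximal elements over maximal bundles are both controlled), making the set of two-rank candidates in dimension $4$ finite and enumerable. Combined with Proposition~\ref{p:nesto} --- which disposes of all two-rank posets with only trivial bundles, since those give nestohedra and the chosen example is not one (cf.\ the octagonal-face discussion) --- this reduces the universally quantified claim to a finite computation, which is exactly what the paper carries out. Your fallback ``reassemble a two-rank realization from two-rank realizations of the facets'' is not a well-defined procedure and would need substantial development before it could be called a proof strategy.
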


\begin{proof}
Consider the 4D example in Figure~\ref{f:4d}, a three rank poset with $f$-vector $(68, 136, 88, 20)$.   Since this  is not a nestohedron, Proposition~\ref{p:nesto} shows that at least one nontrivial bundle is needed when restricting to two ranks. Using computer calculations, we enumerated the $f$-vectors of all 4D poset associahedra for posets with two ranks and at least one nontrivial bundle.  For each polytope with a matching $f$-vector (about 500 posets), we verified that it was not equivalent to the one in Figure~\ref{f:4d}, with these calculations and comparisons performed using the SAGE package. 
\end{proof}

We close with some special examples.

\begin{prop} \label{p:perm}
Let $\bu x$ be a bundle with $n$ elements of poset $P$ such that all of its elements are maximal.  If $\partial x = P - \bu x$, then $\KP = \K(P - \bu x) \times \per_n$.
\end{prop}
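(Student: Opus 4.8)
The plan is to apply Theorem~\ref{t:const} with a carefully chosen order of truncations, and to identify the resulting sequence of truncations on $\K(P-\bu x) \times \Delta_{n-1}$ as exactly the truncations that turn the simplex $\Delta_{n-1}$ into the permutohedron $\per_n$. Write $\bu x = \{x_1, \dots, x_n\}$, with each $x_i$ maximal and $\partial x_i = P - \bu x =: Q$ for all $i$. First I would set up the base case: since the elements of $\bu x$ are all maximal and share the boundary $Q$, they lie on maximal chains of $P$, so Theorem~\ref{t:const}(2) applies. Removing the $n-1$ elements $x_2, \dots, x_n$ one at a time (each forming the nontrivial part of the bundle as it is removed) shows that $\KP$ is obtained by truncating $\K(Q \cup \{x_1\}) \times \Delta_{n-1}$. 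By Corollary~\ref{c:max}, provided $\partial x_1 = Q$ is connected — which I will need to address, see below — we have $\K(Q \cup \{x_1\}) = \K Q$, so the starting polytope is $\K Q \times \Delta_{n-1}$.

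The heart of the argument is then to describe precisely which faces get truncated. A tube of $P$ is either a tube of $Q$ (if it misses all of $\bu x$), or a connected filled lower set containing $Q$ together with some nonempty subset $\{x_i : i \in A\}$ of the bundle (note: such a set is automatically connected since $Q$ is connected and each $x_i$ lies above $Q$, and it is filled exactly because it meets $\bu x$); the only constraint is $A \neq \{1,\dots,n\}$, since that tube would be all of $P$. Tubings pair these: a tubing of $P$ restricts to a tubing $T_Q$ of $Q$ together with a nested chain of tubes of the form $Q \cup \{x_i : i \in A\}$, and the filling condition on unions forces the index sets $A$ occurring to form a chain of proper nonempty subsets of $\{1,\dots,n\}$ — but also compatible with $T_Q$ via the requirement that $T_Q \cup \{$the smallest such tube$\}$ be filled, which it always is. The key observation is that the tubings living "above" a fixed maximal tubing of $Q$ and involving the bundle are governed entirely by chains of proper nonempty subsets of $\{1,\dots,n\}$ — i.e., by the face poset of the permutohedron $\per_n$, whose facets correspond to proper nonempty subsets and whose face lattice is the lattice of ordered set partitions. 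So the face poset $\pi(P)$ decomposes as $\pi(Q) \times (\text{face poset of } \per_n)$, matching $\K Q \times \per_n$; and geometrically this is realized by the standard sequence of truncations of $\Delta_{n-1}$ into $\per_n$ (truncate faces in order of decreasing dimension of the corresponding subset $A$, equivalently increasing $|A|$), performed in the $\Delta_{n-1}$ factor of $\K Q \times \Delta_{n-1}$, which is exactly the order prescribed by Theorem~\ref{t:const}.

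I expect the main obstacle to be twofold. First, the connectivity of $Q = \partial x$: the statement does not assume $Q$ is connected, so either one argues that $\KP$ still behaves well (e.g. apply Proposition~\ref{p:discon} to $Q$, noting each $x_i$ lies above \emph{all} Hasse components of $Q$, which glues them, so the relevant intermediate poset is connected even when $Q$ is not) or one shows the formula $\K Q \times \per_n$ is unaffected — this bookkeeping needs care but is not deep. Second, and more seriously, verifying that the \emph{order} of truncations dictated by Theorem~\ref{t:const}(2) — which removes bundle elements one at a time and truncates at each stage — actually reproduces the permutohedron rather than some other iterated truncation of the simplex: one must check that truncating the faces of $\Delta_{n-1}$ corresponding to singletons, then pairs, then triples, and so on, in that order, yields $\per_n$, and that no truncated face ever fails to be a face at the moment it is truncated. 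This is essentially the classical fact that $\per_n$ is obtained from $\Delta_{n-1}$ by truncating all faces indexed by proper subsets of size $1, 2, \dots, n-1$ in increasing order of size, and identifying our truncation sequence with it is where the real work lies, though it parallels the associahedron constructions already carried out in the preceding examples.
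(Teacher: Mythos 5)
Your proposal is essentially correct but takes a genuinely different route from the paper. You work geometrically through Theorem~\ref{t:const}(2), tracking which faces of $\K(P-(\bu x - x))\times\Delta_{n-1}$ get truncated and identifying the induced truncation of the simplex factor with the classical construction of $\per_n$ from $\Delta_{n-1}$. The paper instead gives a short, purely combinatorial bijection on face posets: it splits a tubing $T$ of $P$ into the tubes missing $\bu x$ (a tubing of $P-\bu x$) and the tubes meeting $\bu x$ (each of which is the lower set generated by a subset of $\bu x$, with compatibility equal to nestedness), identifies the latter with tubings of the complete graph $\Gamma_x$ on $n$ nodes, and invokes $\K\Gamma_x = \per_n$. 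The paper's route thus never touches truncation order, connectivity of $\partial x$, or Corollary~\ref{c:max} --- the three places where your bookkeeping lives --- and it outsources the permutohedron identification to the known graph-associahedron fact rather than to the simplex-truncation fact. Two remarks on your version: first, Theorem~\ref{t:const}(2) removes all of $\bu x - x$ in a single step (product with $\Delta_{|\bu x -x|}$, then truncate), not one element at a time, though you land at the correct intermediate polytope $\K(Q\cup\{x\})\times\Delta_{n-1}$; second, the ``real work'' you defer is less onerous than you fear, since in this poset the only tube of $Q\cup\{x\}$ containing $x$ is the whole poset, so the truncated faces are exactly $\K Q\times F_A$ for proper nonempty $A\subset\bu x$ in increasing order of $|A|$, and truncating all proper faces of $\Delta_{n-1}$ in increasing dimension is the standard omnitruncation yielding $\per_n$. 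Your observation that the bundle elements reconnect the Hasse components of $Q$ is a point the paper's proof quietly absorbs into the claim that tubings missing $\bu x$ are exactly tubings of $P-\bu x$; making that explicit, as you do, is a small added value of your approach.
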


\begin{proof}
A tubing $U \in \KP$ containing no tubes that intersect $\bu x$ can be viewed as a tubing in $\K(P - \bu x)$; call it $\alpha(U)$.  Let $V \in \KP$ be a tubing with only tubes that intersect $\bu x$, where a tube in $V$ is a lower set generated by a subset of $\bu x$, and compatibility of these tubes is equivalent to the subsets being nested.

Let $\Gamma_x$ be the complete graph with $n$ nodes, labeled by elements of $\bu x$.  Let $\beta(U)$ be the tubing on $\Gamma_x$ such that if $t \subset \bu x$ generates a tube in $U$, $t$ is a tube in $\beta (U)$.  Any tubing $T \in \KP$ can be written as a tubing $U$ and a tubing $V$, where the map $T \to (\alpha (U), \beta(V))$ preserves compatibility and is bijective.  The proof follows since the  graph associahedron of a complete graph of $n$ nodes is the permutohedron $\per_n$.  
\end{proof}

\begin{figure}[h]
\includegraphics{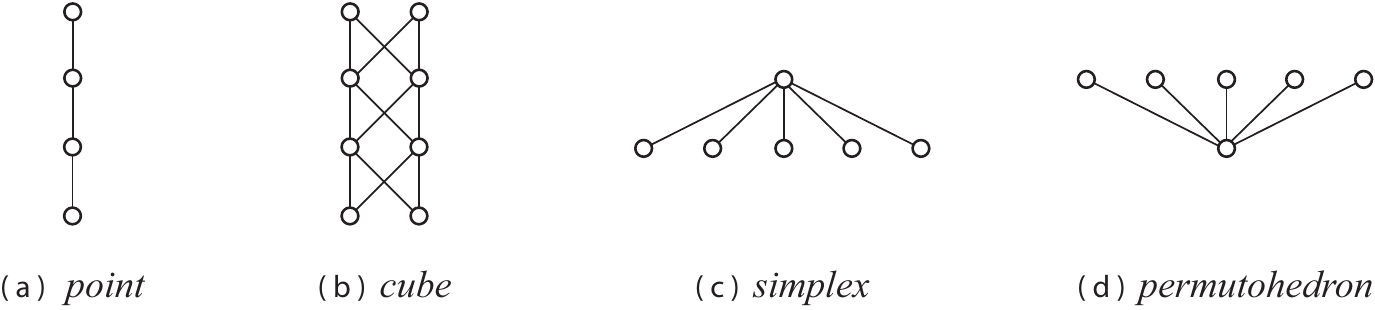}
\caption{Examples of simple posets.}
\label{f:simple}
\end{figure}

\begin{cor} \label{p:cube}
Consider Figure~\ref{f:simple}.  If (a) $P$ is a chain, (b) a cross-stack of $n$ rank, (c) a one rank element with $n+1$ boundary elements, or (d) a bundle with $n$ maximal elements, then $\KP$ is a point, an $n$-cube, an $n$-simplex, or $\per_n$, respectively.
\end{cor}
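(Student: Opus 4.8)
The plan is to dispatch the four cases more or less independently, leaning on the dimension-and-face-poset statement of Theorem~\ref{t:combin} and on Proposition~\ref{p:perm}, and reserving a hands-on enumeration of tubes and tubings only for the two cases (c) that are not direct specializations of earlier results. Throughout I use the convention that connectivity is measured in the Hasse diagram, so that co-membership in a bundle does not by itself make two minimal elements adjacent.

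Case (a) is immediate: in a chain every element has a distinct boundary, so each bundle is trivial and $n=b$, whence Theorem~\ref{t:combin} forces $\dim \KP = n - b = 0$, a point (the empty tubing being the unique tubing). Case (d) is a direct application of Proposition~\ref{p:perm}: the bundle $\bu x$ of $n$ maximal elements has $\partial x = P - \bu x$, and $\K(P - \bu x)$ is a point, so Proposition~\ref{p:perm} gives $\KP = \K(P-\bu x) \times \per_n = \per_n$.

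For case (b) I would induct on the rank $n$ of the cross-stack $P$. Its top bundle $B$ consists of two maximal elements with $\partial(\text{top}) = P - B$ equal to the cross-stack of rank $n-1$; applying Proposition~\ref{p:perm} with a bundle of size two yields $\KP = \K(P - B) \times \per_2$. Since $\per_2 = \Delta_1$ is an interval, the inductive hypothesis $\K(P-B) = (\Delta_1)^{n-1}$ upgrades to $\KP = (\Delta_1)^n$, the $n$-cube; the base case $n=1$ is the bundle of two elements, whose tubings (the empty one and the two minimal singletons) assemble into the segment $\per_2 = \Delta_1$.

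Case (c) is where I expect to do real work, and it is the main obstacle. Here $x$ has boundary equal to all $n+1$ minimal elements, so any tube containing $x$ must contain $\partial x$ and hence be all of $P$ (excluded), while any lower set avoiding $x$ is a set of pairwise non-adjacent minimal elements, connected only when it is a singleton; thus the tubes are exactly the $n+1$ minimal singletons. A collection of these is a tubing iff its union is filled, and the only filledness constraint in this poset comes from $x$ (every minimal element is trivially accounted for), so the union fails to be filled precisely when it contains $\partial x$, i.e.\ precisely when all $n+1$ singletons are present. Hence the tubings are exactly the proper subsets of an $(n+1)$-element set ordered by reverse inclusion, which is the face poset of $\Delta_n$; combined with $\dim \KP = n$ from Theorem~\ref{t:combin}, this gives $\KP = \Delta_n$. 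The delicate point to get right is exactly this last mechanism: it is the lone element $x$ (with $\bu x = \{x\}$ missed while $\partial x$ is covered) that forbids the otherwise-admissible full family of minimal singletons, and recognizing that this single deleted face is what converts an over-saturated facet collection into the boundary complex of a simplex—rather than, say, a cross-polytope—is the crux of the argument.
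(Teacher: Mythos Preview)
Your argument is correct in all four cases, but it diverges from the paper's in two places. The paper's proof is a one-liner: case (a) is handled by Corollary~\ref{c:max} (iteratively peel off the top of the chain, whose boundary is connected), and cases (b), (c), (d) are \emph{all} handled uniformly by Proposition~\ref{p:perm}. For (c) in particular, the paper applies Proposition~\ref{p:perm} to the trivial bundle $\bu x=\{x\}$, obtaining $\KP\cong\K(P-x)\times\per_1=\K(P-x)$; since $P-x$ is the disjoint union of $n+1$ isolated points, Proposition~\ref{p:discon} then gives $\Delta_n$ directly. Your hands-on enumeration of tubes and tubings for (c) is more elementary and self-contained---it avoids invoking Proposition~\ref{p:perm} in the degenerate one-element-bundle regime and sidesteps the disconnection formula---but it sacrifices the uniformity that lets the paper dispose of (b), (c), (d) in a single stroke. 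Similarly, your dimension-count argument for (a) via Theorem~\ref{t:combin} is a legitimate shortcut that bypasses the inductive stripping of Corollary~\ref{c:max}. Both routes are sound; the paper's is shorter, yours is more transparent about \emph{why} the simplex appears in (c).
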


\begin{proof}
The first follows from Corollary~\ref{c:max}, and the rest from Proposition~\ref{p:perm}.
\end{proof}


%
%
\section{Proofs} \label{s:proof}
\subsection{}

The proof of Theorem~\ref{t:const} is now given, which immediately results in Theorem~\ref{t:combin}.  We proceed by induction on the size of the connected poset $P$:  First, an explicit construction of the polytope $\KP$ is provided, as outlined in Theorem~\ref{t:const}, based on the truncation of a smaller polytope (using the induction hypothesis).  And second, a poset isomorphism is created between the newly constructed $\KP$ and tubings of $P$, establishing the result.  

Throughout this proof, we let $x$ be a maximum element in a maximal chain of $P$.   We first consider the case when $\bu x$ is trivial, and begin with a definition.

\begin{defn}
For trivial $\bu x$, if there exists a pairwise disjoint tubing $T$ of $P-x$ such that 
$$\fil_x(T) \ := \ \{x\} \ \cup \ \{p \in t \suchthat t \in T\}$$ 
is a tube of $P$, call $\fil_x(T)$ the $x$-\emph{fill} of $T$; Figure~\ref{f:filling} gives some examples.
\end{defn}

\begin{figure}[h]
\includegraphics{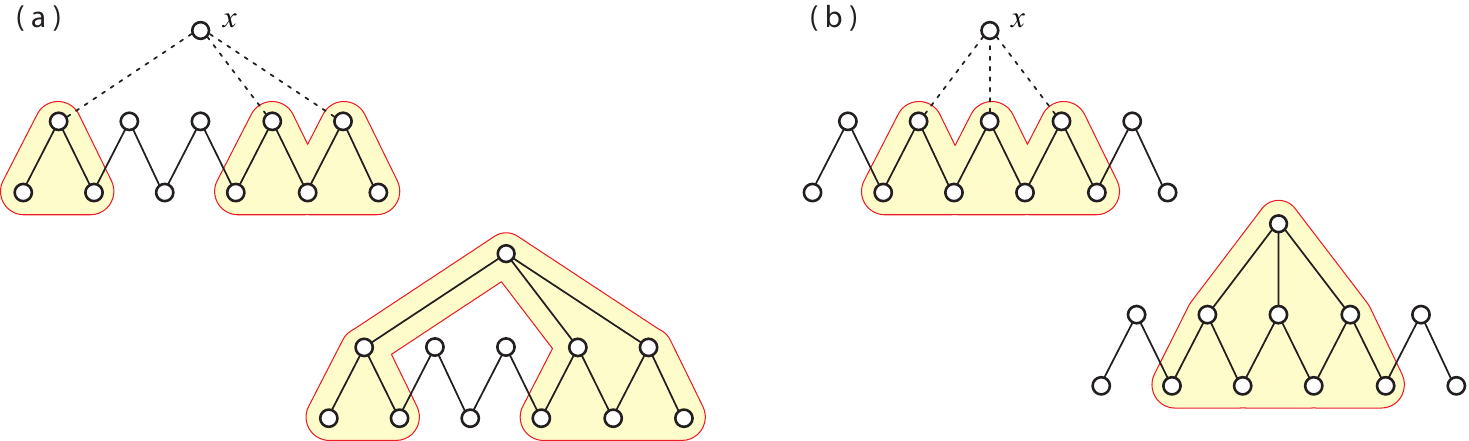}
\caption{Some tubings of $P-x$ (top row) and their $x$-fills (bottom row).}
\label{f:filling}
\end{figure}

\emph{\textcolor{mains}{Truncation algorithm:}} \ \ 
By induction, the theorem holds for $\K(P-x)$; we refer to this object both as a polytope and as the poset of tubings on $P-x$.  Construct $\KP$ by truncating the faces $T$ of $\K(P-x)$ such that $\fil_x(T)$ are tubes of $P$.   The resulting new facets inherit the labelings of $\fil_x(T)$, while retaining the old labels for any facets not truncated; the label of each face is the set of labels of its adjacent facets.   We perform this truncation iteratively, based on \emph{decreasing} size of the tubes $\fil_x(T)$; for a tie, the order of truncation is arbitrary.

\begin{prop}
For trivial $\bu x$, there exists a poset isomorphism $\map$ from $\KP$ to the simple polytope found by truncating $\K(P-x)$ as described above.
\end{prop}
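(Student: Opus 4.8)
The goal is to show that the truncation procedure applied to $\K(P-x)$ — truncating exactly those faces $T$ of $\K(P-x)$ whose $x$-fill $\fil_x(T)$ is a tube of $P$, in order of decreasing tube size — produces a polytope whose face poset is isomorphic to the poset of tubings of $P$. The plan is to build the isomorphism $\map$ explicitly on facets first, then extend it to all faces by declaring that a face maps to the tubing obtained as the union (in $P$) of the images of its adjacent facets, and finally to check this is a well-defined poset isomorphism.

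First I would set up the correspondence on facets. The facets of $\K(P-x)$ are the tubes of $P-x$; after truncation, the facets of the new polytope are (i) the non-truncated old facets, i.e.\ tubes $t$ of $P-x$ that are \emph{not} of the form $\fil_x(T)$ in a way that forces truncation, together with the subtlety that a tube $t$ of $P-x$ may or may not remain a tube of $P$ depending on whether $\partial x\subseteq t$ forces $x\in t$; and (ii) the new facets created by truncating a face $T$, labeled $\fil_x(T)$. I claim that in case (i) the surviving old facets are exactly the tubes of $P$ not containing $x$, and in case (ii) the new facets are exactly the tubes of $P$ containing $x$. The key combinatorial lemma here is: a tube $s$ of $P$ with $x\in s$ is precisely $\fil_x(T)$ for $T$ the tubing of $P-x$ consisting of the connected components of $s-x$ (these are automatically pairwise disjoint, and filledness of $s$ in $P$ translates to each component being a tube of $P-x$ together with $\fil_x(T)$ being filled). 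This gives a bijection between tubes of $P$ and facets of the truncated polytope.

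Next I would verify that this facet bijection respects incidence, which is what forces it to extend to a poset isomorphism. A face of a simple polytope is determined by the set of facets containing it, and after a sequence of truncations one must track how faces of $\K(P-x)$ break up. The main technical point — and the place I expect the real difficulty — is checking that the \emph{order} of truncation (decreasing size of $\fil_x(T)$) guarantees that each scheduled face $T$ is still an actual face of the polytope-in-progress when its turn comes, and that truncating it does not destroy or prematurely subdivide a face scheduled later. This is exactly the standard "the truncations are independent / compatible" issue for iterated truncations: I would argue that if $\fil_x(T)$ and $\fil_x(T')$ are both tubes of $P$ and $T,T'$ are nested or disjoint appropriately, then the faces $T,T'$ intersect transversally in $\K(P-x)$, and truncating the larger one first leaves the smaller one intact as a face (possibly now sitting partly on a new facet). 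Conversely, if $\fil_x(T)$ and $\fil_x(T')$ are \emph{not} jointly part of a tubing of $P$ — because their union is unfilled or they cross — then $T$ and $T'$ should not meet in a common face at the relevant stage, so no truncation is performed along a nonexistent face. Establishing this dictionary between "compatible as tubes of $P$" and "the corresponding faces/new facets have nonempty, correct-dimension intersection after truncation" is the heart of the argument.

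Finally I would assemble the isomorphism: define $\map$ on facets by the bijection above, extend to a face $F$ of the truncated polytope by sending it to the union of $\map$ of the facets containing $F$, check this union is a tubing of $P$ (using the filledness condition built into the definition of a tubing, which matches the filledness built into which faces get truncated), check injectivity and surjectivity onto $\pi(P)$, and observe that both the polytope's face poset and $\pi(P)$ are ordered so that $F\le F'$ iff the facet set of $F$ contains that of $F'$ iff $\map(F)\supseteq\map(F')$ — i.e.\ reverse containment on both sides — so $\map$ is an order isomorphism. The dimension count $n-b$ then follows by induction since each truncation preserves dimension and removing a trivial-bundle maximal element decreases $n$ by $1$ and $b$ by $1$ exactly when $\partial x$ was a bundle-completing boundary (handled in the earlier corollaries), leaving dimension unchanged in the cases covered by Corollary~\ref{c:max} and dropping appropriately otherwise.
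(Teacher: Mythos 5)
Your plan follows the same overall strategy as the paper's proof: identify facets of the truncated polytope with tubes of $P$ (surviving old facets with tubes avoiding $x$, new facets from truncating a face $T$ with $\fil_x(T)$), then check that a set of tubes is a tubing exactly when the corresponding facets share a face. But what you yourself call ``the heart of the argument'' --- the dictionary between compatibility of tubes of $P$ and nonempty intersection of the corresponding facets after the iterated truncations --- is only asserted, and that is where essentially all the content of the proof lies. Two concrete gaps. First, your claim that if $\fil_x(T)$ and $\fil_x(T')$ are not jointly part of a tubing of $P$ then ``$T$ and $T'$ should not meet in a common face at the relevant stage'' fails in the key case: if $t_1=\fil_x(T_1)$ and $t_2=\fil_x(T_2)$ both contain $x$ and cross, the union $T_1\cup T_2$ can perfectly well be a valid tubing of $P-x$, so the faces $T_1$ and $T_2$ \emph{do} intersect in $\K(P-x)$. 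The separation must be manufactured by the truncation order: one has to exhibit a tube $t_*\supseteq t_1\cup t_2$ of $P$ whose earlier truncation (larger tubes are cut first) passes through the face labeled $T_1\cup T_2$ and thereby separates the two later facets. Second, in the converse direction, ``truncating the larger one first leaves the smaller one intact'' does not address the real issue, which is that for a genuine tubing $T$ the running intersection of its facets must survive all $m$ truncations, including truncations of faces labeled by tubes \emph{not} in $T$. The paper handles this with an inner induction on intermediate label sets $T_k$, showing at each step that at least one tube of $S_k$ leaves the list, so the face being truncated does not contain the current common intersection. Nothing in your outline substitutes for that bookkeeping, and without it the backward direction is unproved.

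A smaller point: your closing dimension remark is muddled. Removing a maximal element $x$ with $\bu x$ trivial always decreases both $n$ and $b$ by one ($x$ is its own bundle and, being maximal, lies in no other element's boundary), so $n-b$ is unchanged in every case, not only in the situation of Corollary~\ref{c:max}; in any event the dimension count belongs to Theorem~\ref{t:combin} rather than to this proposition.
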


\begin{proof}
It is straightforward that labeling tubes of the facets of our truncated polytope are in bijection with the tubes of $P$.  We show that $\map$ is a bijection of tubings by checking that a collection $T$ of tubes $\{t\}$ of $P$ is a tubing if and only if its corresponding facets $\{f(t)\}$ in $\KP$ intersect at a face.  This simultaneously shows that $\map$ preserves the ordering of tubings.

\emph{\textcolor{mains}{Forward direction:}} \ \ 
We show if $T$ is not a tubing, then $\{f(t) \suchthat t \in T\}$ has empty intersection.  If not a tubing, there is either a subset $S$ of $T$ that is not filled, or there is a pair of nonnested but intersecting tubes in $T$.  First consider the former, when $S$ is unfilled:   If $x$ is the only element whose absence causes $S$ to be unfilled, our truncations will have effectively separated the facets by removing their intersection; otherwise, the conclusion follows by the inductive assumption on $\K(P-x)$.  

Now consider when $T$ contains intersecting, nonnested tubes, say $t_1$ and $t_2$ of $P$.  The faces  $f(t_1)$ and $f(t_2)$ have empty intersection when neither tubes contain $x$ (due to inductive assumption on $\K(P-x)$) or when only one tube $t_1$ contains $x$ (since $f(t_1)$ was formed by truncating a face which was not contained in $f(t_2)$).  When both tubes contain $x$, then there must exist tube $t_*$ of $P$ containing their union.  Now let $t_1=\fil_x(T_1)$ and $t_2=\fil_x(T_2)$ for disjoint tubings $T_1$ and $T_2$ of $P-x$.  Note that we are only concerned if $T_1 \cup T_2$ is a tubing of $P-x$, implying the faces  $f(t_1)$ and $f(t_2)$ originally intersected in $\K(P-x)$.
Let $t_* = \fil_x(T_*)$ for a tubing $T_*$ of disjoint tubes, each containing some of $\partial x$. Since $t_1, t_2 \subset t_*$, the truncation of $T_* \subset T_1 \cup T_2$ will have occurred before either of the latter, and will have involved the truncation of the face labeled by $T_1 \cup T_2,$ ensuring that future facets associated to the truncations of $f(t_1)$ and $f(t_2)$  will not intersect.

\emph{\textcolor{mains}{Backward direction:}} \ \ 
Using finite induction on the truncated tubes in $P$, we show if $T$ is a tubing, then $\{f(t) \suchthat t \in T\}$ has nonempty intersection.  Note that the tubes $\{t_i\}$ in $T$ containing $x$ must be nested $t_1\supset \dots \supset t_m$ since they intersect at least in $x$.  Let $S_i$ be the set of disjoint tubes of $P-x$ such that $t_i = \fil_x(S_i)$.
Thus, associated to a tubing $T$ of $P$, there is a tubing $T_0$ of $P-x$, made of all the tubes of $T$ that do not contain $x$, together with all the tubes in sets $S_i$.

Our argument considers a series of $m$ truncations, proceeding from the list of tubes in $T_0$ to the list of tubes in $T$.  By the construction of $T_0$, restrict attention to the truncations that form facets whose labels are in $T$.  At each step, we show that truncation allows the tubes in the new intermediate list to label facets that intersect.  The base case follows from the induction hypothesis (on the number of elements in our poset) that facets labeled by the tubes of $T_0$ do intersect at a unique face of the simple polytope $\K(P-x)$.

Recursively define an intermediate set of tubes called $T_k$, corresponding to having performed truncations to create the facets labeled by $t_1$ through $t_k$, where
$$
T_{k} \ := \ (T_{k-1} - S_{k}) \ \cup\ (S_k \ \cap \ (T \ \cup \ \bigcup_{{i>k}} S_i)) \ \cup \ (\{t_{k}\}\cap T).
$$
Indeed, after all $m$ truncations, we will have transformed $T_0$ to become $T$, by adding to the list of tubes in $T_0$ all the new tubes $t_i$ in $T$ and subtracting all the tubes which are not in $T$.


By induction, assume that after truncating to create facets $\{t_1, \dots, t_{k-1}\}$, the facets labeled by the tubes in $T_{k-1}$ do indeed intersect.  We use this assumption to show that truncating to create the facet $t_{k} = \fil_x(S_k)$ will preserve the property of intersection: To obtain $T_k$ from $T_{k-1}$,  add $t_k$ but crucially also remove at least one tube; specifically, we claim that at least one tube in $S_k$ will be removed.

Since truncation occurs in decreasing order of containment, the tubes $\{t_{k+1}, \dots, t_m\}$ are all sequentially and properly contained inside of $t_k$.  And since any tube of $T \cap S_k$ must be contained in the smallest of the tubes $t_i$, if there is one or more $t_i \subset t_k$, then the tubes in $S_k$ cannot all be found again among the tubes in the  sets $\{S_{k+1}, \dots, S_m\}$, nor in $T \cap S_k$ itself.  Finally, remove some of $S_k$ when $t_k$ is the smallest tube in $T$ containing $x$, since $T$ (being filled) cannot contain $S_k$.

Therefore, since $T_{k} \cap T_{k-1} = T_{k} - \{t_k\}$ does not contain $S_k$, truncating face $f$ (whose containing facets are labeled by $S_k$) will not separate the facets labeled by the tubes of $T_{k} - \{t_k\}.$ 
However, $f$ does intersect the face where the facets labeled by $T_{k} - \{t_k\}$ intersect, so their intersection will further intersect the new facet labeled by $t_k.$ Thus, the facets labeled by the tubes in $T_k$ will have a nonempty intersection.
\end{proof}

\subsection{}

We now consider the second case in Theorem~\ref{t:combin}, when $\bu x$ is nontrivial.  

\emph{\textcolor{mains}{Truncation algorithm:}} \ \ 
Construct the new polytope $\KP$ by truncating certain faces of 
$$\KP_* \ := \ \K(P-(\bu x - x)) \times \Delta_{|\bu x - x|}.$$ 
Begin by labeling the vertices of $\Delta_{|\bu x - x|}$ with the elements of $\bu x$, and its faces by the subset of vertex labels which they contain.   The faces of $\KP_*$ get labeled by the pairing $(T,B)$, for the corresponding tubing $T$ of $\K(P-(\bu x-x))$ and subset $B$ of $\bu x$.  Now truncate the faces labeled with $(T,B)$ where tubing $T$ has only one tube $t$.   If $x \notin t$, label the resulting new facet with $t$.  After truncations, let the label of each face be the set of labels of its adjacent facets, retaining the old labels for any facets not truncated.  We perform this truncation iteratively, based on \emph{increasing} size of tubes $t$.

\begin{prop}
For nontrivial $\bu x$, there exists a poset isomorphism $\map$ from $\KP$ to the simple polytope found by truncating $\KP_*$ as described above.
\end{prop}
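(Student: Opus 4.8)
The plan is to follow the template of the trivial-bundle case: first build a dictionary between the facets of the truncated polytope and the tubes of $P$, and then verify that a collection of tubes is a tubing of $P$ precisely when the corresponding facets meet in a common face. Once the dictionary is fixed the map $\map$ is forced, and the forward/backward equivalence automatically records that $\map$ preserves the ordering by reverse containment. The one genuinely new feature is the simplex factor $\Delta_{|\bu{x}-x|}$, whose vertices are labeled by the elements of $\bu{x}$ and which records which bundle elements a given tube contains; the truncations should transform this factor into the permutohedral refinement of Proposition~\ref{p:perm}.

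\emph{Facet--tube dictionary.} I would classify the tubes of $P$ by their intersection with $\bu{x}$. Since every element of $\bu{x}$ has boundary $\partial x$, a lower set containing some $b\in\bu{x}$ contains $\partial x$, and conversely fillness forces a lower set containing $\partial x$ to meet $\bu{x}$. Hence a tube $s$ either satisfies $s\cap\bu{x}=\emptyset$, in which case it is a tube of $P-\bu{x}$ and matches a facet $f(t)\times\Delta_{|\bu{x}-x|}$ of $\KP_*$ with $x\notin t$; or it satisfies $s\cap\bu{x}=B\neq\emptyset$, in which case $t:=(s\setminus\bu{x})\cup\{x\}$ is a tube of $P-(\bu{x}-x)$ containing $x$, and $s$ is recovered from the pair $(t,B)$. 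I would then check that the faces $(\{t\},B)$ truncated by the algorithm are indexed exactly by these pairs, that each truncation creates exactly one new facet, and that the inherited labeling matches $s$, giving the claimed facet bijection.

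\emph{The two directions.} With the dictionary in place it remains to show that a set of tubes is a tubing iff its facets meet. For the forward direction I would assume $T$ is not a tubing and show the facets have empty intersection. When the obstruction (an unfilled subcollection, or an intersecting nonnested pair) does not involve $\bu{x}$, the inductive hypothesis on $\K(P-(\bu{x}-x))$ and the product structure of $\KP_*$ suffice, as in the trivial case. The new obstructions come from the bundle: since two tubes meeting $\bu{x}$ share $\partial x$, their compatibility is governed by the complete graph on $\bu{x}$, i.e.\ by $\per_{|\bu{x}|}$, forcing their bundle-subsets to be nested, and the increasing-size truncation order cuts away the face that would otherwise carry an incompatible pair. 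For the backward direction I would run the same finite induction as in the trivial case: the tubes of $T$ meeting $\bu{x}$ form a chain $s_1\supset\cdots\supset s_m$ with nested subsets $B_1\supseteq\cdots\supseteq B_m$; starting from a face of $\KP_*$ labeled by the associated pair $(T_0,B)$, where $T_0$ collects the tubes of $T$ read inside $P-(\bu{x}-x)$, and processing truncations in increasing size, each truncation indexed by a tube of $T$ adjoins its new facet while deleting a face not shared by $T$ (using fillness of $T$ and the nesting of the $B_i$), so the intersection survives and gains exactly the new facet.

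\emph{Main obstacle.} I expect the crux to be the interaction of the simplex factor with the truncation order: one must verify that the nested chain $B_1\supseteq\cdots\supseteq B_m$ threading through the tubes meeting $\bu{x}$ survives as an honest flag of faces, i.e.\ that cutting $\KP_*$ by increasing tube size really does realize the simplex-to-permutohedron passage of Proposition~\ref{p:perm} compatibly with the base factor $\K(P-(\bu{x}-x))$. Establishing that this truncation is well defined, independent of the arbitrary ordering among tubes of equal size, and yields a simple polytope is the delicate bookkeeping underlying both directions.
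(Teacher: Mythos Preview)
Your plan is correct and follows essentially the same route as the paper: build the facet--tube dictionary via the intersection with $\bu{x}$, then check forward/backward compatibility using the increasing-size truncation order, with the simplex factor encoding the bundle combinatorics exactly as in Proposition~\ref{p:perm}. The paper's proof fills in this same outline with a more explicit case analysis---in the forward direction it isolates the prior truncation of the face corresponding to $t_1\cap t_2$ that separates two nonnested tubes both meeting $\bu{x}$, and in the backward direction it exhibits at each step a specific witness facet $F_*$ containing the current truncated face but none of the later ones---so your sketch would need to supply these two ingredients to become complete.
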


\begin{proof}
It is straightforward that labeling tubes of the facets of our truncated polytope are in bijection with the tubes of $P$.  We show that $\map$ is a bijection of tubings by checking that a collection $T$ of tubes $t$ of $P$ is a tubing if and only if its corresponding facets $f(t)$ in $\KP$ intersect at a face.  This simultaneously shows that $\map$ preserves the ordering of tubings.

\emph{\textcolor{mains}{Forward direction:}}\ \ 
We show if $T$ is not a tubing, then $\{f(t) \suchthat t \in T\}$ has empty intersection.  If not a tubing, there is either a subset $S$ of $T$ that is not filled, or there is a pair of nonnested but intersecting tubes in $T$.  In the case of the former, when $S$ is unfilled, we see implied a further subset $S'$ that was unfilled in the poset $P-(\bu x-x)$, where $S'$ consists of the tubes of $S$ with one modification: Replace any portion of $\bu x$ in those tubes with $x$.  Since, by induction, the facets labeled by tubes of $S'$ have no common intersection in $\K(P-(\bu x-x)),$ and since the product of polytopes preserves this fact, then the faces of the product bearing labels from $S'$ do not have a common intersection.

Now consider when $T$ contains intersecting, nonnested tubes, say $t_1$ and $t_2$ of $P$.  Replace any portion of $\bu x$ contained in them with $x$, resulting in $t'_1 := t_1-(\bu x-x)$ and $t'_2 := t_2-(\bu x-x)$.   If these tubes are still intersecting but nonnested, their facets in $\K(P-(\bu x-x))$ had no intersection, and  this property will be passed along to our new polytope. But if the tubes $t'_1$ and $t'_2$ are nested or equal, then both $t_1$ and $t_2$ contained some of $\bu x$, and we must further consider the intersection 
\begin{equation}
\label{e:inter}
t_1 \ \cap \ t_2 \ \cap \ \bu x \,.
\end{equation}
If this is empty, then $t_1$ and $t_2$ are tubes created by truncating faces of the product polytope $\KP_*$, which in turn corresponded to faces of $\Delta_{|\bu x - x|}$ which did not intersect. Again the non-intersection is inherited by $\K(P-(\bu x-x))$.

Finally if \eqref{e:inter} is nonempty, then it is straightforward to see that the facets labeled by $t_1$ and $t_2$ result from truncating faces that originally do intersect in $\KP_*$. 
Here, there is a third, prior truncation (of a face $f$) of the product polytope that contains the intersection of the faces that are truncated to become $t_1$ and $t_2$.   Indeed, face $f$ gives rise to the facet labeled by the tube $t_1 \cap t_2$, and thus  was labeled in the product by the smaller of $t'_1$ and $t'_2$, paired with \eqref{e:inter}.
Therefore, it is truncated first and effectively separates the others.

\emph{\textcolor{mains}{Backward direction:}}\ \ 
Using finite induction on the truncated tubes $\{t_1, \dots, t_m\}$ in $P$, we show if $T$ is a tubing, then $\{f(t) \suchthat t \in T\}$ has nonempty intersection.  Our argument proceeds by constructing a series of $m$ truncations, showing at each step that the tubes do indeed label facets that intersect.   For a tubing $T$, create the set of pairs 
$$T_0 \ = \ \{\, (t_*,B) \suchthat t_* \ \mbox{is a tube of} \ P-(\bu x-x), \ B \subset \bu x\} \,,$$
where 
\begin{equation*}
t_* \ = \
\begin{cases}
\ (t-(\bu x -x), \ t \cap \bu x) & \hspace{10 pt} \mbox{if}  \ \partial x \subset t\\
\ (t, \bu x)& \hspace{10 pt}  \mbox{otherwise}.
\end{cases}
\end{equation*}
This set gives a list of faces of $\KP_*$, whose intersection is nonempty, providing the base case for truncation:  Since $T$ is a tubing, the tubes of $T$ which contain $\partial x$ are all nested, and by our construction of $T_0$, the set of second elements in the pairs are subsets of $\bu x$ having a common intersection. 

Recursively define an intermediate set of labels $T_k$, formed by performing truncations to create facets labeled by $t_1, \dots, t_k$.  
If $t_k$ is not in $T$, let $T_k$ be $T_{k-1}$.  Otherwise, $T_k$ is formed by discarding from $T_{k-1}$ the pair $(t_k - (\bu x-x), \, t_k \cap \bu x)$ and replacing it with $t_k$ itself.   This corresponds to labeling the new facet with the new tube $t_k$.  We need to show  that faces labeled by  elements of $T_k$ still have a nonempty intersection after the truncation of face $t_k$.
If $t_k$ does not contain $\partial x$, then it labels a facet and its truncation does not change the polytope.  Otherwise,  $t_k$ is either (1) contained in or (2) intersects (but is not nested with) some tube $\{t_{k+1}, \dots, t_m\}$.

In the latter case (2), where $t_k$ intersects such a tube, then $t_k \notin T,$ and we argue that the truncated face does not contain the intersection of the facets labeled by $T_k$.   This follows because $t_k$ either inherits an empty intersection with the faces represented by $T_0$ (from one of the two polytopes in the cross product), or is separated from the intersection of the faces represented by $T_{k-1}$ by an earlier truncation (from the proof in the forward direction).

Now consider the former case (1):  
If $(t_k \cap \bu x) = (t_{k+1} \cap \bu x)$, let $F_*$ be the facet labeled by the pair $(t_k,\bu x)$.  Here, before any truncation, $F_*$ originally contains faces labeled by $(t_k, B)$, for $B\subset \bu x$.  On the other hand, if $(t_k \cap \bu x) \subset (t_{k+1} \cap \bu x)$, let $F_*$ be the facet labeled by the pair $(P-(\bu x-x), \bu x -z)$, for some $z \in (t_{k+1} \cap \bu x) -  (t_k \cap \bu x)$.  In this case, this facet contains faces labeled by $(t,B)$, where $t \subseteq P-(\bu x-x)$ and $B \subseteq \bu x -z$.

In either case, $F_*$ is chosen to contain the truncated face at step $k$ and no other face scheduled to be truncated afterwards.   Moreover, $F_*$ is chosen such that it will eventually be labeled by a tube $t$ that intersects but is not nested with $\{t_{k+1}, \dots, t_m\}$.  Therefore, since we are dealing with simple polytopes, $F_*$ also cannot contain any of the facets represented by the elements of $T_k$, both those labeled by tubes not containing $\partial x$, and those created by earlier truncations.  

Finally, note that the truncated face $t_k$ was assumed to intersect the common intersection of all the other faces represented by $T_{k-1}$, and so the facet created still does. Therefore, the faces represented by $T_k$ have a common intersection, and thus the tubing $T$ will be represented by facets with a common intersection in $\KP$.
\end{proof}

%
%
\bibliographystyle{amsplain}

\end{document}